\newcommand{\R}{\mathbb{R}}
\newcommand{\N}{\mathbb{N}}
\newcommand{\C}{\mathbb{C}}
\newcommand{\Z}{ \mathbb{Z}}
\newcommand{\dif}{\mathrm{d}}
\newcommand{\eps}{\varepsilon}
\newcommand{\ii}{\mathrm{i}}
\newcommand{\ee}{\mathrm{e}}
\newcommand*{\open}[1]{#1^\circ}
\newcommand*{\close}[1]{\overline{#1}}
\DeclareMathOperator{\sgn}{sgn}
\DeclareMathOperator{\Pro}{\mathbb{P}}
\DeclareMathOperator{\Exp}{\mathbb{E}}
\DeclareMathOperator{\Uni}{Unif}
\DeclareMathOperator{\Rate}{\mathbb{I}}
\DeclareMathOperator{\Lip}{Lip}
\renewcommand{\Re}{\operatorname{Re}}  
\newtheorem{thm}{Theorem}[section]
\newtheorem{cor}[thm]{Corollary}
\newtheorem{proposition}[thm]{Proposition}
\newtheorem{rem}[thm]{Remark}
\newtheorem{lem}[thm]{Lemma}
\newtheorem{prob}[thm]{Problem}
\newtheorem{thmalpha}{Theorem}
\theoremstyle{definition}
\newtheorem{ex}[thm]{Example}
\begin{document}

\title{\bf The large deviation behavior of lacunary sums}
\medskip

\author{Lorenz Fr\"uhwirth, Michael Juhos, Joscha Prochno}



\date{}

\maketitle

\begin{abstract}
\small
We study the large deviation behavior of lacunary sums $(S_n/n)_{n\in\N}$ with $S_n:= \sum_{k=1}^nf(a_kU)$, $n\in\N$, where $U$ is uniformly distributed on $[0,1]$, $(a_k)_{k\in\N}$ is an Hadamard gap sequence, and $f\colon \R\to\R$ is a $1$-periodic, (Lipschitz-)continuous mapping. In the case of large gaps, we show that the normalized partial sums satisfy a large deviation principle at speed $n$ and with a good rate function which is the same as in the case of independent and identically distributed random variables $U_k$, $k\in\N$, having uniform distribution on $[0,1]$. When the lacunary sequence $(a_k)_{k\in\N}$ is a geometric progression, then we also obtain large deviation principles at speed $n$, but with a good rate function that is different from the independent case, its form depending in a subtle way on the interplay between the function $f$ and the arithmetic properties of the gap sequence. Our work generalizes some results recently obtained by Aistleitner, Gantert, Kabluchko, Prochno, and Ramanan [Large deviation principles for lacunary sums, preprint, 2020] who initiated this line of research for the case of lacunary \emph{trigonometric} sums.
\medspace
\vskip 1mm
\noindent{\bf Keywords}. {Hadamard gap sequence, large deviation principle, large gap condition, geometric progression}\\
{\bf MSC}. Primary 42A55; 60F10; 11L03; Secondary 37A05; 11D45; 11K70
\end{abstract}



\section{Introduction \& Main results}

The study of lacunary (trigonometric) series is a classical but still flourishing topic in harmonic analysis. Its origins can be traced back to the work of Rademacher \cite{rademacher1922} in 1922, who studied the convergence of series of the form
\begin{equation}
\label{EqRademacherSum}
\sum_{k=1}^{ \infty } b_k r_k( \omega ),
\end{equation}
where $ \omega \in [0,1 ]$, $b = (b_k )_{k \in \N }$, and $r_k$ denotes the $k$\textsuperscript{th} Rademacher function $r_k( \omega ) = \sgn\bigl(\sin ( 2^k \pi \omega ) \bigr)$. Rademacher proved that under the assumption of square summability $\sum_{ k=1}^{ \infty } b_k^2 < \infty $ such a series converges for almost every $ \omega \in [0,1]$. It was then in 1925 that Kolmogorov and Khinchin \cite{KolmogoroffChinchine1925} discovered the necessity of square summability in an even more general setting. Observing that for Rademacher sums \eqref{EqRademacherSum} one has the relation 
\begin{equation*}
\sum_{ k=1}^{ \infty } b_k r_k( \omega )= \sum_{ k=1}^{ \infty } b_k r_1(2^{k-1} \omega ),
\end{equation*}
where in the sum on the right-hand side we have a fixed function $r_1$ containing a sequence of exponentially growing dilation factors, marks the beginning of the study of lacunary trigonometric series of the form
\begin{equation*}
\sum_{ k=1}^{\infty} b_k \cos ( 2 \pi a_k \omega ) \qquad{} \text{and} \qquad{} \sum_{ k=1}^{\infty} b_k \sin ( 2 \pi a_k \omega ),
\end{equation*}
where $\omega \in [0,1]$, $(b_k)_{ k \in \N}$ is a sequence of real numbers, $(a_k)_{ k \in \N }$ is a sequence of positive integers that is lacunary, in the sense that it satisfies the Hadamard gap condition 
\begin{equation*}
\frac{a_{k+1}}{a_k} \geq q > 1 , \qquad{} \text{ for every } k \in \N. 
\end{equation*}
Kolmogorov \cite{Kolmogoroff1924} showed convergence of such series under the $\ell_2$-assumption on $(b_k)_{k \in \N}$ and a few years later Zygmund \cite{zygmund1930} showed that this assumption is necessary. 
\par{}
Coming back to the Rademacher functions $(r_k)_{k \in \N}$, one can easily check that they form a system of independent random variables. Thus, it is natural to ask weather $ \sum_{ k \in \N} b_k r_k$ satisfies a central limit theorem (CLT). Under the assumptions that $(b_k)_{k \in \N} \notin \ell_2$ and $ \max_{ 1 \leq k \leq n } \lvert b_k \rvert = o( \lVert (b_k)_{k \in \N} \rVert_2)$ one can verify that Lindeberg's condition holds and thus we have that for $t \in \R$
\begin{equation*}
\lim_{n \to \infty} \lambda \Bigl( \Bigl\{ \omega \in [0,1] \colon \sum_{ k=1}^n b_k r_k( \omega ) \leq t \lVert (b_k)_{ k=1 }^n \rVert_2 \Bigr\} \Bigr) = \frac{1}{ \sqrt{2 \pi} } \int_{- \infty }^{t} \ee^{-y^2/2} \,\dif y.
\end{equation*} 
In particular this is true when $b_k = 1$ for all $k \in \N$. Kac proved in 1939 a similar CLT for a sequence $(a_k)_{ k \in \N }$ with large gaps, that is, if $ a_{k+1}/a_k \rightarrow \infty $, as $k \rightarrow \infty$ in case of lacunary trigonometric functions. A few years later, in 1947, Salem and Zygmund \cite{SalemZyg1947} showed the CLT under the Hadamard gap condition. More precisely they showed that if $(a_k)_{k \in \N}$ satisfies $a_{k+1}/a_k  \geq q > 1$, we have
\begin{equation*}
\lim_{n \to \infty} \lambda \Bigl( \Bigl\{  \omega \in [0,1] \colon \sum_{ k=1}^n  \cos( 2 \pi a_k \omega) \leq t \sqrt{n/2} \Bigr\}   \Bigr) = \frac{1}{ \sqrt{2 \pi} } \int_{- \infty }^{t} \ee^{-y^2/2} \,\dif y.
\end{equation*}
It became more and more evident that lacunary sums show a behavior similar to sums of truly independent random variables. Salem and Zygmund \cite{SalemZyg1950} in 1950 and Erd\H{o}s and G\'al \cite{ErdoesGal1955} in 1955 were able to prove a law of the iterated logarithm under the Hadamard gap condition, that is, for almost every $ \omega \in [0,1]$ we have
\begin{equation*}
\limsup_{n \rightarrow \infty } \frac{ \sum_{k=1}^n  \cos ( 2 \pi a_k  \omega )}{ \sqrt{ n \log \log n } } = 1.
\end{equation*}
It is therefore natural to ask whether results like these can be generalized to arbitrary periodic functions. This is not always possible as a famous example of Erd\H{o}s and Fortet (see, e.g.,~\cite{ErdoesFortetEx}) showed, where the law of the iterated logarithm failed to be true even for very simple trigonometric polynomials. In 1946, Kac \cite{KacCLT} considered functions $f \colon \R \rightarrow \R $ with
\begin{equation*}
\int_{0}^1 f(\omega ) \,\dif\omega  = 0 \quad{} \text{ and } \quad{} f( \omega +1) = f( \omega)
\end{equation*}
for all $ \omega \in [0,1]$. He was able to show that if one additionally assumes that $f$ is of bounded variation or Hölder-continuous, then $ \frac{1}{\sqrt{n}} \sum_{ k=1}^n f(2^{k - 1} \omega )$ converges in distribution to a normal distribution with mean $0$ and variance
\begin{equation*}
\sigma^2 := \int_{0}^1 f( \omega )^2 \,\dif\omega + 2 \sum_{ k=1}^{ \infty } \int_{0}^{1} f( \omega ) f( 2^k \omega ) \,\dif\omega,
\end{equation*}
provided the latter exists. The form of the variance in the limit is somewhat unexpected, since one would naively assume the same variance as in the independent case, namely $ \int_{0}^{1} f( \omega )^2 \,\dif\omega$. This observation shows that not only the regularity of the function $f$ plays a role, also the arithmetic structure of the sequence $(a_k)_{k \in \N}$ claims its influence. This phenomenom became even more visible when Gapo\v skin \cite{gaposkin} linked the existence of a CLT to the number of solutions of certain Diophantine equations. Only a few years ago Aistleitner and Berkes \cite{AistBerk} improved his result. While both CLT and LIL are quite well understood for lacunary series, fluctuations on the scale of large deviations were not considered till very recently, when in 2020 Aistleitner, Gantert, Kabluchko, Prochno, and Ramanan \cite{agkpr2020} initiated the study of large deviation principles (LDP) for lacunary \emph{trigonometric} sums, obtaining a series of unexpected results that display a subtle dependence on the arithmetic structure of the gap sequence that is not visible in the trigonometric setting on the scales of a CLT and a LIL; for the latter two the arithmetic structure is irrelevant and they always display a behavior as in the case of independent and identically distributed random variables.

In this paper we continue the study of large deviations for lacunary series and make progress on some of the open problems stated in \cite{agkpr2020}. More precisely, we will consider Hadamard gap sequences $(a_k)_{k \in \N}$ and study the tail behavior of partial sums of the form
\begin{equation}
\label{IntroEqLacSum}
\sum_{ k=1}^n f( a_k \omega ),
\end{equation} 
where $\omega \in [0,1]$ and $f \colon \R \rightarrow \R $ is a general $1$-periodic function satisfying certain regularity assumptions. Functions like the one in \eqref{IntroEqLacSum} can be interpreted as random variables on $[0,1]$ equipped with the Borel-sigma field and the Lebesgue measure $\lambda$ on it. 
\par{}

\subsection{Main Results}
\label{SectionResults}

Before presenting the main results, recall that a sequence $(X_n)_{n \in \N}$ of random variables satisfies an LDP at speed $(s_n)_{n \in \N}$ and rate function $\Rate \colon \R \rightarrow [0, \infty]$ if, for every Borel measurable set $A \subset \R$, we have 
\begin{equation*}
- \inf_{ x \in \open{A}} \Rate(x) \leq \liminf_{n \rightarrow \infty } \frac{1}{s_n} \log\Pro[X_n \in A] \leq \limsup_{n \rightarrow \infty } \frac{1}{s_n} \log\Pro[X_n \in A]
\leq - \inf_{ x \in \close{A}} \Rate(x),
\end{equation*}
where $\open{A}$ and $\close{A} $ denote the interior and the closure of $A$, respectively. The speed $(s_n)_{n \in \N}$ is a sequence of positive real numbers tending to infinity and the rate function $\Rate$ is lower-semicontinuous. If $\Rate$ has compact level sets, we speak of a good rate function (GRF). The classical setting of independent and identically distributed (i.i.d.)\ random variables is dealt with in Cram\'er's theorem \cite{CramerThm1938}: for a sequence of i.i.d.\ random variables $(X_n)_{n \in \N}$ with finite exponential moments, i.e.,
\begin{equation*}
\Lambda(u):= \Exp\bigl[ \ee^{ u X_1} \bigr] < \infty
\end{equation*} 
for all $u$ in a neighborhood of $0$, one has that
\begin{equation*}
\lim_{n \to \infty} \frac{1}{n} \log\Pro[ X_1 + \dotsb + X_n \geq n t] = - \Lambda^{*}(t),
\end{equation*}
where $t > \Exp[X_1]$ and $ \Lambda^* \colon \R \rightarrow [0, \infty ]$ is the Legendre--Fenchel transform of $\Lambda$.

Now, let $U \sim \Uni([0,1])$ be a random variable with the uniform distribution on $[0,1]$. Given a sequence of positive integers $(a_k)_{k \in \N}$ and $f \colon \R \rightarrow \R$, a measurable $1$-periodic function, we define
\begin{equation}
\label{MainResEqLacSum}
S_n := \sum_{k=1}^{n} f(a_k U), \qquad n\in\N.
\end{equation} 
Our aim is to prove LDPs for the sequence $(S_n/n)_{n \in \N}$ in two different and natural settings, namely for large gaps and for gap sequences forming a geometric progression.

\subsubsection{The case of large gaps}

We first consider the case of independent and identically distributed random variables. Assume that for the function $f \colon \R \rightarrow \R$,
\begin{equation}
\label{MainResEqiidCumGenFct}
\widetilde{ \Lambda}^f( \theta ) := \log \int_{0}^{1} \ee^{ \theta f( \omega ) } \,\dif\omega
\end{equation}
exists for all $ \theta \in \R$. Let $(U_k )_{ k \in \N} $ be an i.i.d.\ sequence  of random variables with the same distribution as $U$ and define
\begin{equation}
\label{MainResEqiidSum}
\widetilde{S}_n := \sum_{k=1}^{n} f(a_k U_k),\qquad n\in\N.
\end{equation}
By Cramér's theorem, $( \widetilde{S}_n/n)_{n \in \N}$ satisfies an LDP in $\R$ at speed $n$ and with GRF $\widetilde{I}^f \colon \R \rightarrow [0, \infty ]$ given by the Legendre--Fenchel transform of $\widetilde{\Lambda}^f $, that is,
\begin{equation}
\label{MainResEqiidGRF}
\widetilde{I}^f(x) := \sup_{ \theta \in \R} \bigl[ \theta x - \widetilde{ \Lambda}^f(\theta) \bigr].
\end{equation} 
Our first result treats the case where the Hadamard gap sequence $(a_k)_{k \in \N}$ has ``large gaps'', i.e.,
\begin{equation*}
\frac{a_{k+1}}{a_k} \stackrel{k \rightarrow \infty}{\longrightarrow} \infty .
\end{equation*}

\begin{thmalpha}
\label{MainResThmLargeGaps}
Let $f \colon \R \to \R $ be a $1$-periodic continuous function, let \(U \sim \Uni([0, 1])\) and \((a_k)_{k \in \N }\) be a lacunary sequence with large gaps. Then the sequence $( S_n/n)_{n \in \N }$ satisfies an LDP in $\R $ at speed $n$ and with GRF $\widetilde{I}^f$, where \(S_n\) is defined as in Equation~\eqref{MainResEqLacSum}.
\end{thmalpha}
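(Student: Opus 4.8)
The plan is to invoke the G\"artner--Ellis theorem. Since $f$ is continuous and $1$-periodic, hence bounded, the function $\widetilde{\Lambda}^f(\theta)=\log\int_0^1\ee^{\theta f(\omega)}\,\dif\omega$ is finite for every $\theta\in\R$; it is the (scaled) cumulant generating function of the bounded random variable $f(U)$, so it is convex and real-analytic on all of $\R$, in particular essentially smooth. Consequently, once one shows that the limiting scaled cumulant generating function of $(S_n/n)_{n\in\N}$ exists and equals $\widetilde{\Lambda}^f$, i.e.
\[
\lim_{n\to\infty}\frac1n\log\Exp\bigl[\ee^{\theta S_n}\bigr]=\lim_{n\to\infty}\frac1n\log\int_0^1\ee^{\theta\sum_{k=1}^nf(a_k\omega)}\,\dif\omega=\widetilde{\Lambda}^f(\theta)\qquad\text{for all }\theta\in\R,
\]
the G\"artner--Ellis theorem immediately yields the full LDP at speed $n$ with good rate function $(\widetilde{\Lambda}^f)^{*}=\widetilde{I}^f$. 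So the entire proof reduces to computing this limit, and all the work goes into that.

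Next I would reduce to the case of trigonometric polynomials. Fix $\theta$ and write $g:=\ee^{\theta f}$, a strictly positive continuous $1$-periodic function with $0<\min g\le\max g<\infty$ and $\int_0^1g=\ee^{\widetilde{\Lambda}^f(\theta)}$. Given $\eps>0$, choose a \emph{strictly positive} trigonometric polynomial $\widetilde g$ with $\lVert\widetilde g-g\rVert_\infty$ so small that $\lVert\log\widetilde g-\theta f\rVert_\infty<\eps$ (e.g.\ approximate $\ee^{\theta f/2}$ uniformly by a trigonometric polynomial $p$ and take $\widetilde g=\lvert p\rvert^2$, which is a genuine trigonometric polynomial, adding a tiny positive constant if needed). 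Then $\ee^{-n\eps}\prod_{k=1}^n\widetilde g(a_k\omega)\le\ee^{\theta\sum_{k=1}^nf(a_k\omega)}\le\ee^{n\eps}\prod_{k=1}^n\widetilde g(a_k\omega)$ pointwise and $\bigl\lvert\log\int_0^1\widetilde g-\widetilde{\Lambda}^f(\theta)\bigr\rvert<\eps$, so up to an additive error $2\eps$ it suffices to prove
\[
\lim_{n\to\infty}\frac1n\log\int_0^1\prod_{k=1}^n\widetilde g(a_k\omega)\,\dif\omega=\log\int_0^1\widetilde g(\omega)\,\dif\omega
\]
for an arbitrary strictly positive trigonometric polynomial $\widetilde g$, say of degree $d$, and then let $\eps\to0$.

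The arithmetic heart of the matter is then an \emph{exact} identity. Expanding each factor into its finite Fourier series and integrating term by term, and using $a_k\in\N$ together with $\int_0^1\ee^{2\pi\ii\ell\omega}\,\dif\omega=\mathbbm 1_{\{\ell=0\}}$ for $\ell\in\Z$,
\[
\int_0^1\prod_{k=1}^n\widetilde g(a_k\omega)\,\dif\omega=\sum_{\substack{m_1,\dots,m_n\in\Z,\ \lvert m_k\rvert\le d\\ m_1a_1+\dots+m_na_n=0}}\ \prod_{k=1}^n\widehat{\widetilde g}(m_k).
\]
The key claim is that there is an index $K=K(d,q,(a_k))$, with $q>1$ the Hadamard ratio, such that every integer solution of $m_1a_1+\dots+m_na_n=0$ with $\lvert m_k\rvert\le d$ is supported in $\{1,\dots,K-1\}$: if $k_0$ were the largest index with $m_{k_0}\ne0$, then, using $a_j\le a_{k_0-1}q^{-(k_0-1-j)}$,
\[
a_{k_0}\le\lvert m_{k_0}\rvert a_{k_0}=\Bigl\lvert\sum_{k<k_0}m_ka_k\Bigr\rvert\le d\sum_{k<k_0}a_k\le\frac{dq}{q-1}\,a_{k_0-1},
\]
which forces $a_{k_0}/a_{k_0-1}\le dq/(q-1)$ and is violated once $k_0-1$ is large, since $a_{k+1}/a_k\to\infty$; so pick $K$ with $a_{k+1}/a_k>dq/(q-1)$ for all $k\ge K-1$. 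Then for $n\ge K-1$ each index $k\ge K$ contributes the factor $\widehat{\widetilde g}(0)=\int_0^1\widetilde g$, and the remaining indices range over exactly the solution set defining $\int_0^1\prod_{k=1}^{K-1}\widetilde g(a_k\omega)\,\dif\omega$, whence
\[
\int_0^1\prod_{k=1}^n\widetilde g(a_k\omega)\,\dif\omega=\Bigl(\int_0^1\prod_{k=1}^{K-1}\widetilde g(a_k\omega)\,\dif\omega\Bigr)\Bigl(\int_0^1\widetilde g\Bigr)^{\,n-K+1},\qquad n\ge K-1.
\]
The left-hand factor is a fixed positive constant, so $\tfrac1n\log(\cdot)$ converges to $\log\int_0^1\widetilde g$, finishing the reduced problem and hence the theorem.

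I expect the trigonometric-polynomial reduction to be the decisive step and the place a careless attempt goes wrong. Working directly with $g=\ee^{\theta f}$, which has infinitely many Fourier modes, and estimating $\int_0^1\prod_{k\le n}g(a_k\omega)\,\dif\omega$ by peeling off one factor at a time (integration by parts against the slowly varying product $\prod_{k<n}g(a_k\cdot)$, or controlling that product's Fourier coefficients) produces error terms carrying prefactors of order $\bigl(\lVert g\rVert_\infty/\int_0^1 g\bigr)^{n}$, which are exponentially large and \emph{cannot} be absorbed unless $a_{k+1}/a_k$ grows at least linearly in $k$ --- far more than the large-gap hypothesis, which permits arbitrarily slow growth of $a_{k+1}/a_k$. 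Passing to a trigonometric polynomial of finite degree $d$ is exactly what caps the coefficients in the Diophantine equation $\sum m_ka_k=0$ at $d$, so that its solution set is finite and confined to a bounded range of indices, turning the awkward asymptotic estimate into the eventually-geometric identity above. The remaining ingredients --- G\"artner--Ellis, the uniform approximation, and the bookkeeping with $\tfrac1n\log$ --- are routine.
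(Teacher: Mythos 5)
Your proof is correct, and its skeleton is the same as the paper's: reduce to the G\"artner--Ellis limit, replace the integrand by a trigonometric polynomial of finite degree $d$, observe that the large-gap condition kills every non-trivial solution of the Diophantine equation $\sum_k m_k a_k=0$ with $\lvert m_k\rvert\le d$ beyond a fixed index, and read off an exactly geometric product formula for the moment generating function. The one genuine structural difference is in how the approximation is organized. The paper uses a two-stage scheme: first it approximates $f$ itself by a trigonometric polynomial $f_m$ via Stone--Weierstra\ss{} (Step~2 of its proof), and then, for a trigonometric polynomial $f$, it approximates $\ee^{x}$ by its Taylor polynomial $p_d$, so that $p_d(\theta f)$ becomes the finite-degree trigonometric polynomial entering the Diophantine argument (Lemma~\ref{lem:trigon_poly}). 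You instead approximate $\ee^{\theta f}$ in one shot by a strictly positive trigonometric polynomial $\widetilde g$ with $\lVert \log\widetilde g-\theta f\rVert_\infty<\eps$ (the $\lvert p\rvert^2$ trick is a clean way to guarantee positivity). This merges the two sandwiching steps into one and shortens the bookkeeping; the price is that your approximating polynomial and its degree depend on $\theta$, whereas the paper's $f_m$ is chosen once and only the Taylor degree $d$ varies with $\theta$ and $\eps$ --- immaterial here since G\"artner--Ellis only needs the limit pointwise in $\theta$. Your Diophantine estimate (bounding $\sum_{k<k_0}a_k$ by the geometric series coming from the Hadamard ratio and deriving $a_{k_0}/a_{k_0-1}\le dq/(q-1)$) is the same computation as in Lemma~\ref{lem:trigon_poly}, phrased with the global ratio $q$ rather than the local threshold $md+1$. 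Your closing remarks about why one cannot work directly with the full Fourier expansion of $\ee^{\theta f}$ correctly identify the role of the degree truncation.
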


\begin{rem}
In essence, modulo several technicalities, the proof (given in Section~\ref{SectionProofs}) uses uniform approximation of continuous functions via trigonometric polynomials and the G\"artner--Ellis theorem. We also provide a proof in a more restrictive setting using the Fourier expansion technique (see Proposition~\ref{PropFourierMeth}), where we put a growth condition on the Fourier coefficients of the function \(f\). We do this with a view towards potential generalizations, e.g., for certain non-continuous $f$.
\end{rem}

\subsubsection{The case of geometric progressions}

We now consider a lacunary sequence $(a_k)_{k \in \N}$ of the form $a_k=q^k$ for some $q \in \N$ with $q \geq 2$. In this case, the large deviation behavior changes dramatically as is shown in the next theorem.

\begin{thmalpha}
\label{MainResThmGeoProg}
Let $f \colon \R \rightarrow \R $ be $1$-periodic and Lipschitz continuous. Let $( a_k)_{k \in \N}$ be a geometric sequence, i.e., $ a_k= q^k$ for some $q \in \N$ with $q \geq 2$, and let $(S_n/n)_{n \in \N}$ be as defined in Equation \eqref{MainResEqLacSum}. Then
\begin{equation*}
\Lambda_q^f( \theta) := \lim_{n \to \infty} \frac{1}{n} \log\Exp\bigl[ \ee^{ \theta S_n} \bigr]
\end{equation*}
exists with the convergence holding uniformly on compact subsets of an open set $\mathcal{D}$ in the complex plane such that $\R \subset \mathcal{D}$. Moreover, $(S_n/n)_{n \in \N }$ satisfies an LDP in $\R $ at speed $n$. As GRF we obtain the Legendre--Fenchel transform $I_q^f \colon \R \rightarrow [0, \infty]$ of $  \Lambda_{q}^f$, i.e.,
\begin{equation}
\label{MainResEqGRF}
I_q^f(x) := \sup_{ \theta \in \R} \bigl[ \theta x - \Lambda_q^f(\theta) \bigr].
\end{equation} 
Furthermore, the limit $\lim_{ q \rightarrow \infty} I_q^f(x) = \widetilde{I}^f(x)$ holds uniformly on compact subsets of the interval $(-1,1)$.
\end{thmalpha}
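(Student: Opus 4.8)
The plan for Theorem~\ref{MainResThmGeoProg} is to use that, because $f$ is $1$-periodic and $a_k=q^k$, the summands are iterates of one expanding map. With $T_q\colon[0,1]\to[0,1]$, $T_q(x)=qx\bmod 1$, one has $f(q^k\omega)=f(T_q^k\omega)$, so $S_n=\sum_{k=1}^n f(T_q^k\omega)$ is, up to one application of the Lebesgue-preserving map $T_q$, a Birkhoff sum; and $T_q$ is conjugate, via the base-$q$ digit expansion, to the full shift on $\{0,\dots,q-1\}^{\N}$, on which $\theta f$ becomes a Lipschitz — hence H\"older — potential. I would then introduce the weighted transfer (Ruelle) operator $\mathcal L_{q,\theta}g(x)=\frac1q\sum_{j=0}^{q-1}e^{\theta f((x+j)/q)}g\bigl((x+j)/q\bigr)$ and verify, using $T_q$-invariance of $\lambda$, the identity $\Exp[e^{\theta S_n}]=\int_0^1(\mathcal L_{q,\theta}^n\mathbf{1})\,\dif\lambda$. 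For real $\theta$ the Ruelle--Perron--Frobenius theorem on the space of Lipschitz functions supplies a simple, isolated, strictly positive leading eigenvalue $\lambda_q(\theta)$ with strictly positive eigenfunction $h_{q,\theta}$ and a spectral gap, so that $\lambda_q(\theta)^{-n}\mathcal L_{q,\theta}^n\mathbf{1}\to c_q(\theta)h_{q,\theta}$ exponentially fast with $c_q(\theta)>0$; integrating over $[0,1]$ gives $\frac1n\log\Exp[e^{\theta S_n}]\to\log\lambda_q(\theta)=:\Lambda_q^f(\theta)$.

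Next I would promote this to the complex setting. The map $\theta\mapsto\mathcal L_{q,\theta}$ is analytic on all of $\C$, so by analytic perturbation theory of the isolated simple eigenvalue, $\lambda_q$, its spectral projection and the spectral gap persist on an open set $\mathcal D\subset\C$ with $\R\subset\mathcal D$, and $\lambda_q$ extends holomorphically and without zeros to $\mathcal D$. Choosing the branch of $\log\lambda_q$ real on $\R$ and noting $c_q(\theta)\ne 0$ near $\R$ by continuity, the asymptotics above go through and give $\frac1n\log\Exp[e^{\theta S_n}]\to\log\lambda_q(\theta)$ uniformly on compact subsets of $\mathcal D$; in particular $\Lambda_q^f$ is real-analytic on $\R$. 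Since $\Lambda_q^f$ is finite and differentiable on all of $\R$, it is lower semicontinuous and essentially smooth, so the G\"artner--Ellis theorem delivers the full LDP for $(S_n/n)_{n\in\N}$ at speed $n$ with rate function $I_q^f=(\Lambda_q^f)^*$, which is a GRF because $\Lambda_q^f$ is finite in a neighbourhood of $0$.

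For the final assertion, expand $\omega=\sum_{i\ge 1}d_i q^{-i}$ in base $q$; under $\lambda$ the digits $(d_i)_{i\ge1}$ are i.i.d.\ uniform on $\{0,\dots,q-1\}$, and Lipschitz continuity of $f$ gives $|f(q^k\omega)-f(d_{k+1}/q)|\le\Lip(f)/q$. Hence $\Exp[e^{\theta S_n}]=e^{O(n|\theta|\Lip(f)/q)}\bigl(\frac1q\sum_{j=0}^{q-1}e^{\theta f(j/q)}\bigr)^n$ with error uniform in $n$, so $\Lambda_q^f(\theta)=\log\bigl(\frac1q\sum_{j=0}^{q-1}e^{\theta f(j/q)}\bigr)+O(|\theta|\Lip(f)/q)$. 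As $q\to\infty$ the Riemann sum converges to $\int_0^1 e^{\theta f}\,\dif\lambda$, uniformly for $\theta$ in compact sets, whence $\Lambda_q^f\to\widetilde\Lambda^f$ locally uniformly on $\R$; since these are finite convex functions, the stability of the Legendre--Fenchel transform under locally uniform convergence of convex functions upgrades this to $I_q^f\to\widetilde I^f$ uniformly on compact subsets of the interior of the effective domain of $\widetilde I^f$, in particular on compact subsets of $(-1,1)$.

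I expect the main obstacle to be the first two steps: setting up the Ruelle--Perron--Frobenius picture for this family with enough uniformity — positivity, the spectral gap, and above all the analytic persistence of the simple leading eigenvalue into a complex neighbourhood of $\R$ — so as to extract the locally uniform convergence of $\frac1n\log\Exp[e^{\theta S_n}]$. The real bookkeeping is choosing the Banach space on the symbolic model, balancing the regularity of $f$ against the cone/contraction estimates, and tracking all constants uniformly in $\theta$. Once $\Lambda_q^f$ is available as a locally holomorphic function, the LDP is a routine application of G\"artner--Ellis and the $q\to\infty$ limit a routine convex-analysis argument, the only delicate point there being the endpoints of the effective domain, where both $I_q^f$ and $\widetilde I^f$ equal $+\infty$.
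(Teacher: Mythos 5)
Your proposal is correct and, for the main assertions (existence of $\Lambda_q^f$, its holomorphic extension to a complex neighbourhood $\mathcal D\supset\R$ with locally uniform convergence, and the LDP via G\"artner--Ellis), it follows essentially the same route as the paper: rewrite $S_n$ as a Birkhoff sum for $\mathcal T(\omega)=q\omega\bmod 1$, introduce the weighted Perron--Frobenius (Ruelle) operator on $\Lip[0,1]$, use the identity $\Exp[\ee^{\theta S_n}]=\int_0^1\Phi_{\theta,q}^n[\mathbb 1]\,\dif\omega$, invoke the spectral gap for the simple positive leading eigenvalue $\lambda_\theta$, and then Kato's analytic perturbation theory in $\theta$. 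Your detour through the symbolic model (full shift on $q$ symbols) is not needed but changes nothing of substance.

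Where you genuinely diverge is the final assertion $\lim_{q\to\infty}I_q^f=\widetilde I^f$. The paper treats $\Phi_{\theta,q}$ as a Riemann-sum approximation of the rank-one operator $\widetilde\Phi_\theta[g]=\int_0^1\ee^{\theta f(x)}g(x)\,\dif x$ and appeals again to perturbation theory to get $\lambda_{\theta,q}\to\widetilde\lambda_\theta=\ee^{\widetilde\Lambda^f(\theta)}$, then upgrades pointwise to locally uniform convergence by convexity and passes to the Legendre--Fenchel transforms. You instead use the base-$q$ digit expansion: the digits are i.i.d.\ uniform under $\lambda$, and Lipschitz continuity gives $\lvert f(q^k\omega)-f(d_{k+1}/q)\rvert\le \Lip(f)/q$, so $\Lambda_q^f(\theta)=\log\bigl(\tfrac1q\sum_{j=0}^{q-1}\ee^{\theta f(j/q)}\bigr)+O(\lvert\theta\rvert\Lip(f)/q)$ and the Riemann sum converges to $\int_0^1\ee^{\theta f}\,\dif\lambda$. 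This is a clean, fully elementary alternative; it even quantifies the rate in $1/q$ and avoids the (somewhat delicate, and in the paper not fully specified) question of in which operator topology $\Phi_{\theta,q}\to\widetilde\Phi_\theta$ so that spectral perturbation applies. The concluding convex-analysis step (stability of the Legendre--Fenchel transform under locally uniform convergence of finite convex functions) is the same in both arguments, and both treat the restriction to compact subsets of $(-1,1)$ at the same level of detail.
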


We are even able to strengthen Theorem~\ref{MainResThmGeoProg} to arbitrary continuous functions.

\begin{cor}
\label{MainResKorGeoCase}
Let $( a_k)_{k \in \N}$ be a geometric sequence, i.e., $ a_k= q^k$ for some $q \in \N$ with $q \geq 2$, and let $f \colon \R \rightarrow \R$ be a continuous $1$-periodic function. Then, $(S_n/n)_{n \in \N}$ from Equation \eqref{MainResEqLacSum} satisfies an LDP in $\R$ at speed $n$ and some GRF $\Rate_q^f \colon \R \rightarrow [0, \infty]$.
\end{cor}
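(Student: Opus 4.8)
The plan is to bootstrap from Theorem~\ref{MainResThmGeoProg}, which already handles Lipschitz $f$, by a standard exponential-approximation argument. Given a continuous $1$-periodic $f$, approximate it uniformly by $1$-periodic Lipschitz functions $(f_m)_{m\in\N}$ with $\lVert f - f_m\rVert_\infty \to 0$ as $m\to\infty$; one may take $f_m$ to be a trigonometric polynomial (hence smooth, hence Lipschitz) by the Weierstrass approximation theorem, or a piecewise-linear interpolant. Write $S_n := \sum_{k=1}^n f(q^k U)$ and $S_n^{(m)} := \sum_{k=1}^n f_m(q^k U)$. The pointwise bound $\lvert S_n - S_n^{(m)}\rvert \le n\lVert f - f_m\rVert_\infty$ holds \emph{surely}, which is even stronger than exponential equivalence: it gives
\begin{equation*}
\frac{1}{n}\log \Pro\bigl[\lvert S_n/n - S_n^{(m)}/n\rvert > \delta\bigr] = -\infty \quad\text{for all } n \text{ once } \lVert f-f_m\rVert_\infty < \delta.
\end{equation*}
Thus $(S_n^{(m)}/n)_{n}$ and $(S_n/n)_{n}$ are exponentially equivalent at speed $n$ for $m$ large, uniformly as we let $m$ vary.

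The key step is then to invoke the standard result (see, e.g., Dembo--Zeitouni, Theorem 4.2.16, on LDPs and exponentially good approximations) that if for each $m$ the sequence $(S_n^{(m)}/n)_n$ satisfies an LDP with GRF $I_q^{f_m}$, and if the $(S_n^{(m)}/n)_n$ are exponentially good approximations of $(S_n/n)_n$, then $(S_n/n)_n$ satisfies a weak LDP with rate function
\begin{equation*}
\Rate_q^f(x) := \sup_{\delta > 0}\, \liminf_{m\to\infty}\, \inf_{y : \lvert y - x\rvert \le \delta} I_q^{f_m}(y),
\end{equation*}
and that this upgrades to a full LDP with a GRF provided one also establishes exponential tightness of $(S_n/n)_n$. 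Exponential tightness is immediate here: since $f$ is continuous and $1$-periodic it is bounded, say $\lVert f\rVert_\infty \le M$, so $S_n/n \in [-M, M]$ surely, and $(S_n/n)_n$ is supported in a fixed compact set. Hence $\Rate_q^f$ is automatically a good rate function (its level sets are closed subsets of $[-M,M]$).

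The main obstacle is not the soft machinery but verifying that the limiting object $\Rate_q^f$ is genuinely a rate function in the sense required, i.e., that the approximation does not degenerate — but this is handled cleanly by the exponential-equivalence framework, which guarantees the upper and lower bounds in the LDP match up through the $\sup_\delta \liminf_m$ expression above. One should remark that, unlike in Theorem~\ref{MainResThmGeoProg}, we make no claim that $\Rate_q^f$ equals the Legendre--Fenchel transform of a limiting cumulant generating function, nor that $\Lambda_q^f(\theta)$ even exists for non-Lipschitz $f$ as a limit with the uniform-on-compacts analyticity; the corollary asserts only the existence of \emph{some} GRF, which is exactly what the approximation scheme delivers. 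It is worth noting as well that the argument is robust to the choice of approximating sequence $(f_m)$: any two choices yield exponentially equivalent sequences and hence, by uniqueness of the rate function in an LDP, the same $\Rate_q^f$.
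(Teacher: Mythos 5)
Your proposal is correct and follows essentially the same route as the paper: uniform approximation of $f$ by Lipschitz functions, the sure bound $\lvert S_n - S_n^m\rvert \le n\lVert f - f_m\rVert_\infty$ giving exponentially good approximations, Theorem~4.2.16 of Dembo--Zeitouni for the weak LDP with $\Rate_q^f(x) = \sup_{\delta>0}\liminf_{m\to\infty}\inf_{z\in B_\delta(x)} I_q^{f_m}(z)$, and exponential tightness from boundedness of $f$ to upgrade to the full LDP with a good rate function. No gaps.
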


Before we present the necessary large deviation background and then the proofs of the main results, we close this section providing some instructive examples.

\begin{ex}
\begin{asparaenum}
\item \(f(\omega) = \cos(2 \pi \omega)\) and \(a_k = 2^k\). Here the asymptotic cumulant generating function is \(\Lambda_2^f = \Lambda_2\) as given in Theorem~B in~\cite{agkpr2020}. There is no closed expression for it, but the coefficients of its Taylor series may be computed explicitly; see Lemma~3.5 and Appendix~A \textit{loc.~cit.}\ for more information.

\item \(f(\omega) = \cos(2 \pi \omega) + \cos(4 \pi \omega)\) and \(a_k = 2^k\). We can rewrite \(S_n\) in the following way,
\begin{align*}
S_n(\omega) &= \sum_{k = 1}^n \bigl( \cos(2^{k + 1} \pi \omega) + \cos(2^{k + 2} \pi \omega) \bigr)\\
&= 2 \Big(\sum_{k = 1}^n \cos(2^{k + 1} \pi \omega)\Big) + \cos(2^{n + 2} \pi \omega) - \cos(4 \pi \omega)\\
&=: 2 S_n^{(1)}(\omega) + r(\omega),
\end{align*}
where \(S_n^{(1)} = \sum_{k = 1}^n \cos(2^{k + 1} \pi \omega)\) is the lacunary sum implicitly dealt with in Example~1, and \(\lvert r(\omega) \rvert = \lvert \cos(2^{n + 2} \pi \omega) - \cos(4 \pi \omega) \rvert \leq 2\). Therefore,
\begin{equation*}
\frac{1}{n} \log\bigl( \Exp\bigl[ \ee^{\theta S_n} \bigr] \bigr) \leq \frac{2 \theta}{n} + \frac{1}{n} \log\bigl( \Exp\bigl[ \ee^{2 \theta S_n^{(1)}} \bigr] \bigr) \stackrel{n \to \infty}{\longrightarrow} \Lambda_2(2 \theta),
\end{equation*}
and analogously,
\begin{equation*}
\frac{1}{n} \log\bigl( \Exp\bigl[ \ee^{\theta S_n} \bigr] \bigr) \geq -\frac{2 \theta}{n} + \frac{1}{n} \log\bigl( \Exp\bigl[ \ee^{2 \theta S_n^{(1)}} \bigr] \bigr) \stackrel{n \to \infty}{\longrightarrow} \Lambda_2(2 \theta),
\end{equation*}
and this implies \(\Lambda_2^f(\theta) = \Lambda_2(2 \theta)\). This is clearly distinct from \(\Lambda_2\), but note also that we could have obtained the same \(\Lambda_2^f\) if we had started with \(g(\omega) := 2 \cos(2 \pi \omega)\). Thus, we may take away that, for identical \(q\), different \(f\) can result in different \(\Lambda_q^f\), though there is no bijection.

\item \(f(\omega) = \cos(2 \pi \omega) - \cos(4 \pi \omega)\) and \(a_k = 2^k\). Here \(S_n\) will telescope,
\begin{align*}
S_n &= \sum_{k = 1}^n \bigl( \cos(2^{k + 1} \pi \omega) - \cos(2^{k + 2} \pi \omega) \bigr)\\
&= \cos(4 \pi \omega) - \cos(2^{n + 2} \pi \omega),
\end{align*}
this means \(\lvert S_n \rvert \leq 2\) for all \(n \in \N\) and hence
\begin{equation*}
\Lambda_2^f(\theta) = \lim_{n \to \infty} \frac{1}{n} \log\bigl( \Exp\bigl[ \ee^{\theta S_n} \bigr] \bigr) = 0.
\end{equation*}
Therefore, we get a markedly different rate function in this case (the trivial one, to be precise), although \(f\) is nontrivial and seems to differ from the previous example only marginally.

\item \(f(\omega) = \cos(2 \pi \omega) + \cos(4 \pi \omega)\) and \(a_k = 2^k + 1\). Now \((a_k)_{k \in \N}\) is no longer a geometric sequence, yet because of 1-periodicity we have
\begin{align*}
S_n(\omega) &= \sum_{k = 1}^n \bigl( \cos(2 (2^k + 1) \pi x) + \cos(2 (2^{k + 1} + 1) \pi \omega) \bigr)\\
&= \sum_{k = 1}^n \bigl( \cos(2^{k + 1} \pi x) + \cos(2^{k + 2} \pi \omega) \bigr),
\end{align*}
which is exactly the same as in Example~2.
\end{asparaenum}
\end{ex}

Together these examples demonstrate that the rate function, and hence the whole LDP, does depend on the interplay between the function \(f\) and the sequence \((a_k)_{k \in \N}\), not solely on the one or the other. In particular it is not only the arithmetic properties of \((a_k)_{k \in \N}\) that influence the LDP: in the last example it is neither a geometric sequence nor does it have large gaps, so it does not fall into the framework of the present Theorems~\ref{MainResThmLargeGaps} and~\ref{MainResThmGeoProg}, and still an LDP can be proved.

\section{Elements of large deviations theory -- some elementary background}

Large deviations describe the decay of the probability of rare events on an exponential scale. In contrast to the law of large numbers and the CLT, large deviations behavior is highly non-universal. In 1938 Cramér \cite{CramerThm1938} published his famous theorem (see also Theorem~2.2.3 in \cite{dezei1998}): for a sequence of independent and identically distributed random variables $(X_n)_{n \in \N}$ with finite exponential moments, i.e.,
\begin{equation*}
\Lambda(u):= \Exp\bigl[ \ee^{ u X_1} \bigr] < \infty
\end{equation*} 
for all $u$ in a neighborhood of $0$, one has that
\begin{equation*}
\lim_{n \to \infty} \frac{1}{n} \log\Pro[ X_1 + \dotsb + X_n \geq n t] = - \Lambda^{*}(t),
\end{equation*}
where $t > \Exp[X_1]$. The function $ \Lambda^* \colon \R \rightarrow [0, \infty ]$ is the Legendre--Fenchel transform of $\Lambda$, i.e.,
\begin{equation*}
\Lambda^*(x) := \sup_{ t \in \R}[xt - \Lambda(t)] .
\end{equation*}
A few decades later, Donsker and Varadhan initiated the systematic study of large deviations and generalized Cramér's idea (see \cite{dezei1998} for more historical background). A sequence $(X_n)_{n \in \N}$ of random variables (not necessarily i.i.d.)\ satisfies an LDP at speed $(s_n)_{n \in \N}$ with rate function $\Rate \colon \R \rightarrow [0, \infty]$, if for every Borel-measurable set $A \subset \R$ we have 
\begin{equation*}
- \inf_{ x \in \open{A}} \Rate(x) \leq \liminf_{n \rightarrow \infty } \frac{1}{s_n} \log\Pro[X_n \in A] \leq \limsup_{n \rightarrow \infty } \frac{1}{s_n} \log\Pro[X_n \in A]
\leq - \inf_{ x \in \close{A}} \Rate(x),
\end{equation*}
where $\open{A}$ and $\close{A} $ denote the interior and the closure of $A$, respectively. The speed $(s_n)_{n \in \N}$ is a sequence of positive real numbers converging to infinity and the rate function $\Rate$ is lower-semicontinuous. If $\Rate$ has compact level sets, we speak of a good rate function (GRF). An important result from large deviations theory is the G\"artner--Ellis theorem, which we shall use several times in this paper. Having its roots in a paper of G\"artner from 1977 \cite{Gaertner77}, Ellis established this result in 1984 \cite{Ellis1984}. The reader may also consult \cite[Theorem~2.3.6]{dezei1998}.

\begin{thm}
\label{ThmGaertnerEllis}
Let $(Z_n)_{ n \in \N}$ be a sequence of real valued random variables and assume that the following limit exists and is differentiable on $\R$:
\begin{equation}
\label{EqGELim}
\Lambda( \theta ) := \lim_{n \rightarrow \infty } \frac{1}{n} \log \Exp\bigl[ e^{n \theta Z_n}  \bigr] .
\end{equation}
Then, $(Z_n)_{n \in \N}$ satisfies an LDP at speed $n$ and with GRF $\Lambda^{*}\colon \R \rightarrow [0, \infty ]$, where
\begin{equation*}
\Lambda^{*}(x) := \sup_{ \theta \in \R } \bigl[ x \theta - \Lambda( \theta )  \bigr]
\end{equation*}
is the Legendre--Fenchel transform of $\Lambda$. 
\end{thm}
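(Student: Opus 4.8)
The plan is the classical two-sided approach: prove the large deviation upper bound for closed sets and the lower bound for open sets, establishing along the way that $\Lambda^*$ is a good convex rate function. Observe first that each $\theta\mapsto\frac1n\log\Exp[\ee^{n\theta Z_n}]$ is convex (H\"older), hence so is the pointwise limit $\Lambda$; moreover $\Lambda(0)=0$, and since $\Lambda$ is finite on all of $\R$ we get $\Lambda^*(x)\ge\gamma|x|-\max(\Lambda(\gamma),\Lambda(-\gamma))$ for every $\gamma>0$, so $\Lambda^*$ is superlinear, lower-semicontinuous (a supremum of affine maps) and therefore a GRF. Write $\bar x:=\Lambda'(0)$; then $\Lambda^*$ is convex, nonnegative, vanishes at $\bar x$, is nonincreasing on $(-\infty,\bar x]$ and nondecreasing on $[\bar x,\infty)$.

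For the upper bound, start from the exponential Chebyshev inequality: for $\theta\ge0$, $\Pro[Z_n\ge x]\le\ee^{-n\theta x}\Exp[\ee^{n\theta Z_n}]$, whence $\limsup_n\frac1n\log\Pro[Z_n\ge x]\le\Lambda(\theta)-\theta x$ for all $\theta\ge0$, and optimizing gives $\le-\Lambda^*(x)$ for $x\ge\bar x$ (there the constrained and unconstrained suprema agree). Symmetrically $\limsup_n\frac1n\log\Pro[Z_n\le x]\le-\Lambda^*(x)$ for $x\le\bar x$. Given a closed $F\subseteq\R$, split it at $\bar x$: if $F\cap[\bar x,\infty)\ne\emptyset$ set $x_+:=\inf(F\cap[\bar x,\infty))\in F$, so $\Pro[Z_n\in F\cap[\bar x,\infty)]\le\Pro[Z_n\ge x_+]$ and, by monotonicity of $\Lambda^*$, $\limsup_n\frac1n\log\Pro[Z_n\in F\cap[\bar x,\infty)]\le-\Lambda^*(x_+)=-\inf_{F\cap[\bar x,\infty)}\Lambda^*$; treat $F\cap(-\infty,\bar x]$ symmetrically; finally combine the two pieces using that $\frac1n\log(\alpha_n+\beta_n)$ converges to the larger of $\lim\frac1n\log\alpha_n$ and $\lim\frac1n\log\beta_n$. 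This yields $\limsup_n\frac1n\log\Pro[Z_n\in F]\le-\inf_{x\in F}\Lambda^*(x)$; in one dimension this argument bypasses the exponential-tightness and finite-cover machinery needed in higher dimensions.

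The lower bound is the delicate part. It suffices to show that for every $x\in\R$ and every open $G\ni x$ one has $\liminf_n\frac1n\log\Pro[Z_n\in G]\ge-\Lambda^*(x)$, and then take the supremum over $x\in G$. If $\Lambda^*(x)=\infty$ there is nothing to prove; otherwise $x$ lies in the closure of $\Lambda'(\R)$. Suppose first $x=\Lambda'(\eta)$ for some $\eta\in\R$, and fix $\delta>0$ small enough that $(x-\delta,x+\delta)\subseteq G$. Introduce the tilted laws $\Pro_n$ with $\dif\Pro_n/\dif\Pro\propto\ee^{n\eta Z_n}$; writing $\Pro[\,\cdot\,]=\Exp[\ee^{n\eta Z_n}]\,\Exp_n[\ee^{-n\eta Z_n}\mathbf{1}_{\{\cdot\}}]$ and bounding $\ee^{-n\eta Z_n}\ge\ee^{-n(\eta x+|\eta|\delta)}$ on $\{|Z_n-x|<\delta\}$ gives
\begin{equation*}
\Pro\bigl[\,|Z_n-x|<\delta\,\bigr]\;\ge\;\Exp\bigl[\ee^{n\eta Z_n}\bigr]\,\ee^{-n(\eta x+|\eta|\delta)}\,\Pro_n\bigl[\,|Z_n-x|<\delta\,\bigr].
\end{equation*}
Taking $\frac1n\log$ and $\liminf$, the first factor contributes $\Lambda(\eta)$, the exponential contributes $-\eta x-|\eta|\delta$, and $\Lambda(\eta)-\eta x=-\Lambda^*(x)$ since the supremum defining $\Lambda^*(x)$ is attained at $\eta$. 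It remains to check the last factor does not decay exponentially: the tilted family has cumulant limit $\widetilde\Lambda(\lambda):=\lim_n\frac1n\log\Exp_n[\ee^{n\lambda Z_n}]=\Lambda(\lambda+\eta)-\Lambda(\eta)$, again finite and differentiable on $\R$ with $\widetilde\Lambda'(0)=x$; applying the upper bound just proved to $(\Pro_n)$ gives $\limsup_n\frac1n\log\Pro_n[\,|Z_n-x|\ge\delta\,]\le-\min\bigl(\widetilde\Lambda^*(x-\delta),\widetilde\Lambda^*(x+\delta)\bigr)$, and since differentiability of $\widetilde\Lambda$ makes $\widetilde\Lambda^*$ strictly convex near its unique zero $x$ (an interior point of its effective domain), this minimum is strictly positive for $\delta$ small. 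Hence $\Pro_n[\,|Z_n-x|<\delta\,]\to1$, so $\frac1n\log\Pro_n[\,|Z_n-x|<\delta\,]\to0$; letting $\delta\downarrow0$ finishes this case. For a boundary point $x\in\overline{\Lambda'(\R)}\setminus\Lambda'(\R)$ with $\Lambda^*(x)<\infty$, pick $x_m\in\Lambda'(\R)$ with $x_m\to x$ (so $x_m\in G$ for large $m$), apply the previous case, and pass to the limit using $\limsup_m\Lambda^*(x_m)\le\Lambda^*(x)$, valid because a convex function cannot jump downward at an endpoint of its domain.

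Combining the closed-set upper bound with the open-set lower bound gives the LDP at speed $n$, with GRF $\Lambda^*$ by the first paragraph. The step I expect to be the main obstacle is the change-of-measure argument in the lower bound: one must verify that the tilted family $(\Pro_n)$ again satisfies the hypotheses of the theorem, identify $\widetilde\Lambda^*$ together with the location of its zero, and — crucially — justify the exponential decay of $\Pro_n[\,|Z_n-x|\ge\delta\,]$, which is precisely where the differentiability assumption on $\Lambda$ enters, via strict convexity of the conjugate near the relevant point. The bookkeeping at the boundary of $\{\Lambda^*<\infty\}$ is routine but should not be omitted.
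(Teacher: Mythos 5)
This theorem is not proved in the paper at all: it is the (one-dimensional) G\"artner--Ellis theorem, quoted as background with references to G\"artner, Ellis and to Theorem~2.3.6 in \cite{dezei1998}, so there is no in-paper argument to compare against. Your proposal is a correct, self-contained proof of the version stated here (where $\Lambda$ is finite and differentiable on all of $\R$), and it follows the classical route of the cited sources: the Chernoff bound together with monotonicity of $\Lambda^*$ on either side of $\bar x=\Lambda'(0)$ for the closed-set upper bound (which in dimension one indeed dispenses with the exponential-tightness/covering step), and exponential tilting for the lower bound, with the differentiability hypothesis entering exactly where you say it does; your treatment of the boundary points of $\{\Lambda^*<\infty\}$ via the fact that a convex function cannot jump downward at an endpoint is also correct. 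Two small repairs: first, ``strict convexity of $\widetilde\Lambda^*$ near $x$'' is more than you need and not quite the right invocation --- what differentiability of $\widetilde\Lambda$ at $0$ gives directly is that $x$ is the \emph{unique} zero of $\widetilde\Lambda^*$ (if $y>x$ then $\lambda y-\widetilde\Lambda(\lambda)=\lambda(y-x)+o(\lambda)>0$ for small $\lambda>0$, and symmetrically for $y<x$), whence $\min\bigl(\widetilde\Lambda^*(x-\delta),\widetilde\Lambda^*(x+\delta)\bigr)>0$, which is all the argument requires; second, the tilted laws $\Pro_n$ are only defined once $\Exp[\ee^{n\eta Z_n}]<\infty$, and you should note that finiteness of the limit $\Lambda(\eta)$ forces this for all sufficiently large $n$, which is enough for the $\liminf$. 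With these cosmetic fixes the proof is complete.
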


\section{Proofs}
\label{SectionProofs}

In this section we provide the proofs of all statements in Section~\ref{SectionResults}. The central idea is to establish LDPs for trigonometric polynomials and use uniform approximation of continuous functions by such polynomials. We start with a lemma that will later facilitate the application of the G\"artner--Ellis theorem. In the following, if, e.g., $j$ is an integer index, $ j \in [a,b]$ means $ j \in \{ k \in \Z \colon a \leq k \leq b \} $, where $ a,b \in \Z$.

\begin{lem}\label{lem:trigon_poly}
Let \(d \in \N\), define \(p_d(z) := \sum_{j = 0}^d \frac{1}{j!} z^d \) for \( z \in \C \) , let \(m \in \N_0\) and define \(f \colon \R \to \C\) by \(f(x) := \sum_{j = -m}^m c_j \ee^{2 \pi \ii j x}\)  with \(c_{-m}, \dotsc, c_m \in \C\). Then, for all \(\theta, x \in \R\), we have
\begin{equation*}
p_d(\theta f(x)) = \sum_{j = -d m}^{d m} b_j(\theta, d)  \ee^{2 \pi \ii j x},
\end{equation*}
where the coefficients \(b_j(\theta, d)\), \(j \in [-d m, d m]  \), are complex numbers depending on \(c_{-m},\dotsc, c_m\) and with
\begin{equation}\label{eq:limit b_0(theta,d)}
b_0(\theta, d) = \int_0^1 p_d(\theta f(\omega)) \, \dif\omega \stackrel{d\to\infty}{\longrightarrow} \int_0^1 \ee^{\theta f(\omega)} \, \dif\omega.
\end{equation} 
Furthermore, given a lacunary sequence \((a_k)_{k \in \N }\) with large gaps, there exists \(k_0 = k_0(d, m) \in \N\) such that for every \(\theta \in \R\) and for all \(n > k_0\),
\begin{equation*}
\int_0^1 \prod_{k = k_0 + 1}^n p_d(\theta f(a_k \omega)) \, \dif\omega = b_0(\theta, d)^{n - k_0}.
\end{equation*}
\end{lem}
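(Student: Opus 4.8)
The plan is to prove the three assertions in order, since each is essentially elementary once set up correctly. First, for the expansion of $p_d(\theta f(x))$: since $p_d$ is a polynomial of degree $d$ and $f(x)=\sum_{|j|\le m}c_j\ee^{2\pi\ii jx}$ is a trigonometric polynomial with frequencies in $[-m,m]$, the composition $p_d(\theta f(x))=\sum_{\ell=0}^d\frac{\theta^\ell}{\ell!}f(x)^\ell$ is a trigonometric polynomial whose frequencies lie in $[-dm,dm]$ — each power $f(x)^\ell$ has frequencies in $[-\ell m,\ell m]\subseteq[-dm,dm]$ because frequencies add under multiplication. Collecting terms by frequency gives coefficients $b_j(\theta,d)$ depending polynomially on $\theta$ and on $c_{-m},\dots,c_m$; I would note that $b_j(\theta,d)=\int_0^1 p_d(\theta f(\omega))\ee^{-2\pi\ii j\omega}\,\dif\omega$ by orthogonality. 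In particular $b_0(\theta,d)=\int_0^1 p_d(\theta f(\omega))\,\dif\omega$, and since $p_d(z)\to\ee^z$ pointwise and $p_d(\theta f(\omega))$ is uniformly bounded on $[0,1]$ for fixed $\theta$ (as $f$ is bounded and $p_d$ has nonnegative coefficients, so $|p_d(\theta f(\omega))|\le p_d(|\theta|\,\|f\|_\infty)\le\ee^{|\theta|\|f\|_\infty}$), dominated convergence yields \eqref{eq:limit b_0(theta,d)}.

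For the last assertion the key is a ``frequency separation'' argument. Fix $d$ and $m$, and set $k_0$ large enough that for all $k>k_0$ the large-gap condition forces $a_{k+1}>dm\cdot(a_1+\dots+a_k)$, or more simply $a_{k+1}/a_k$ eventually exceeds any prescribed bound; concretely I want $a_{k+1}>2dm\sum_{j\le k}a_j$ (respectively $a_{k+1}>(2dm+1)a_k$ suffices after a further enlargement using $\sum_{j\le k}a_j\le C a_k$, but it is cleanest to phrase it via partial sums), which is possible because $a_{k+1}/a_k\to\infty$. Then for $n>k_0$, the product $\prod_{k=k_0+1}^n p_d(\theta f(a_k\omega))$ is a trigonometric polynomial in $\omega$; expanding each factor using the first part, a generic term has frequency $\sum_{k=k_0+1}^n j_k a_k$ with $|j_k|\le dm$. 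The claim is that such a sum vanishes \emph{only} when every $j_k=0$: if $j_K$ is the nonzero coefficient of largest index $K$, then $|j_K a_K|\ge a_K$ while $|\sum_{k<K}j_k a_k|\le dm\sum_{k<K}a_k<a_K/2<|j_K a_K|$ by the choice of $k_0$, so the total cannot be zero. Hence $\int_0^1\prod_{k=k_0+1}^n p_d(\theta f(a_k\omega))\,\dif\omega$ picks out exactly the product of the $b_0(\theta,d)$ terms, i.e.\ equals $b_0(\theta,d)^{n-k_0}$.

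The main obstacle is getting the quantifier structure of $k_0$ right: it must depend only on $d$ and $m$ (not on $n$ or $\theta$), which is exactly why the frequency bound $dm$ — rather than anything $n$- or $\theta$-dependent — is the relevant quantity, and why the large-gap hypothesis (as opposed to merely a Hadamard gap) is needed to absorb the accumulated lower-order frequencies $dm\sum_{k<K}a_k$ into the single leading term $a_K$. One should also be slightly careful that the $a_k$ are positive integers so that the exponentials $\ee^{2\pi\ii(\sum j_k a_k)\omega}$ are genuine characters of $[0,1]$ and the orthogonality relation $\int_0^1\ee^{2\pi\ii N\omega}\,\dif\omega=\mathbbm{1}_{\{N=0\}}$ applies; this is where $1$-periodicity of $f$ and integrality of $(a_k)$ enter. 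Everything else is bookkeeping.
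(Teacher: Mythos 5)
Your plan is correct and follows essentially the same route as the paper: expand $p_d(\theta f)$ as a trigonometric polynomial with frequencies in $[-dm,dm]$, read off $b_0$ by orthogonality, and choose $k_0$ from the large-gap condition so that the leading nonzero frequency $j_\ell a_\ell$ dominates the accumulated lower-order ones $dm\sum_{k<\ell}a_k$, leaving only the all-zero term after integration. The only (immaterial) differences are that you justify $b_0(\theta,d)\to\int_0^1\ee^{\theta f}$ by dominated convergence with the majorant $p_d(|\theta|\,\lVert f\rVert_\infty)\le\ee^{|\theta|\lVert f\rVert_\infty}$ where the paper invokes uniform convergence of $p_d$ to $\exp$ on compacta, and that your gap threshold ($\ge 2dm+2$, say) is slightly cruder than the paper's $dm+1$.
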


\begin{proof}
A proof by induction reveals that, for all \(j \in \N_0\),
\begin{equation*}
f(\omega)^j = \sum_{k = -j m}^{j m} c_k^{(j)} \ee^{2 \pi \ii k \omega},
\end{equation*}
with some numbers \(c_k^{(j)}\in \C\), \(k \in [-j m, j m]\), the precise knowledge of which is irrelevant. This yields
\begin{align*}
p_d(\theta f(\omega)) &= \sum_{j = 0}^d \frac{\theta^j}{j!} \sum_{k = -j m}^{j m} c_k^{(j)} \ee^{2 \pi \ii k \omega}\\
&= \sum_{k = -d m}^{d m} \biggl( \sum_{j = \lceil \lvert k \rvert / m \rceil}^d \frac{c_k^{(j)} \, \theta^j}{j!} \biggr) \ee^{2 \pi \ii k \omega}\\
&= \sum_{j = -d m}^{d m} b_j(\theta, d) \ee^{2 \pi \ii j \omega},
\end{align*}
where we put
\begin{equation*}
b_j(\theta, d) := \sum_{k = \lceil \lvert j \rvert / m \rceil}^d \frac{c_j^{(k)} \, \theta^k}{k!}.
\end{equation*}
A simple computation reveals that this implies the integral representation of \(b_0(\theta, d)\) in the statement of the lemma, i.e.,
\begin{equation*}
b_0(\theta, d) = \int_0^1 p_d(\theta f(\omega)) \, \dif\omega.
\end{equation*}
Because \(p_d\) approximates the exponential function uniformly on bounded subsets of $\C$ (in particular on compact subsets) and since $\theta f([0,1])\subset \C$ is compact,
integration and limit for \(d \to \infty\) may be exchanged, which shows that
\begin{equation}
\lim_{d\to\infty}b_0(\theta, d) = \int_0^1 \ee^{\theta f(\omega)} \, \dif\omega.
\end{equation}

Now let \(\theta \in \R\) and \(n > k_0\), where \(k_0 \in \N\) is such that \(\frac{a_{k + 1}}{a_k} \geq m d + 1\) for all \(k \geq k_0\). Then we have, for all \(\omega \in [0, 1]\),
\begin{align*}
\prod_{k = k_0 + 1}^n p_d(\theta f(a_k \omega)) &= \prod_{k = k_0 + 1}^n \, \sum_{j = -d m}^{d m} b_j(\theta, d) \ee^{2 \pi \ii j a_k \omega}\\
&= \sum_{j_{k_0 + 1}, \dotsc, j_n = -d m}^{d m}\, \prod_{k = k_0 + 1}^n \bigl( b_{j_k}(\theta, d) \ee^{2 \pi \ii j_k a_k \omega} \bigr)\\
&= \sum_{j_{k_0 + 1}, \dotsc, j_n = -d m}^{d m}\, \biggl( \prod_{k = k_0 + 1}^n b_{j_k}(\theta, d) \biggr) \exp\biggl( 2 \pi \ii \omega \sum_{k = k_0 + 1}^n j_k a_k \biggr).
\end{align*}
It is left to show that if \(j_k \neq 0\) for some \(k \in [k_0 + 1, n]\), then \(\sum_{k = k_0 + 1}^n j_k a_k \neq 0\), because in that case the integral of the exponential function evaluates to zero and the only nontrivial summand left is the one with \(j_{k_0 + 1} = \dotsb = j_n = 0\), which then yields the desired \(b_0(\theta, d)^{n - k_0}\). So let \((j_{k_0 + 1}, \dotsc, j_n) \neq 0\) and let \(\ell := \max\{k \in [k_0 + 1, n] \colon j_k \neq 0\}\). By the choice of \(k_0\), \(a_{\ell} \geq a_k (d m + 1)^{\ell - k}\) for all \(k \in [k_0 + 1, \ell]\). We now consider two cases according to the sign of \(j_{\ell}\). First, suppose that \(j_{\ell} > 0\). Then, because \(j_{\ell} \geq 1\) and \(j_k \geq -dm\) for all \(k \in [k_0 + 1, \ell - 1]\), we obtain
\begin{equation*}
\sum_{k = k_0 + 1}^n j_k a_k = \sum_{k = k_0 + 1}^\ell j_k a_k  = j_{\ell} a_{\ell} + \sum_{k = k_0 + 1}^{\ell-1} j_k a_k \geq a_{\ell} - d m \sum_{k = k_0 + 1}^{\ell - 1} a_k.
\end{equation*}
Using \(a_k \leq a_\ell (d m + 1)^{k - \ell}\) for all \(k \in [k_0 + 1, \ell]\), this can be estimated further and we obtain
\begin{align*}
a_{\ell} - d m \sum_{k = k_0 + 1}^{\ell - 1} a_k & \geq a_{\ell} \biggl( 1 - d m \sum_{k = k_0 + 1}^{\ell - 1} (d m + 1)^{k - \ell} \biggr)\\
& = a_\ell\Biggl( 1 - d m \, (d m + 1)^{k_0 - \ell}\sum_{k=1}^{\ell-k_0-1} (dm+1)^k\Biggr)\\ 
&= a_{\ell} \Biggl( 1 - d m \, \frac{1 - (d m + 1)^{k_0 + 1 - \ell}}{d m} \Biggr)\\
& = a_{\ell} (d m + 1)^{k_0 + 1 - \ell} > 0.
\end{align*}
The case \(j_{\ell} < 0\) is argued analogously. This completes the proof.
\end{proof}

We use the previous lemma to prove Theorem~\ref{MainResThmLargeGaps} by employing the Stone--Weierstraß theorem (see, e.g., \cite[Chapter~15]{Koenigsberger2004}).

\begin{proof}[Proof of Theorem~\ref{MainResThmLargeGaps}]
The proof is done in two steps: first we assume that \(f\) is a trigonometric polynomial and then we use approximation for arbitrary \(f\).

\textit{Step~1:} The general idea is the same as in the proof of Theorem~A in \cite{agkpr2020}; in particular, \cite[Lemma~3.2]{agkpr2020} can be adapted to hold for any bounded function \(f\) instead of \(\omega \mapsto \cos(2 \pi a_k \omega)\).
Now let \(f \colon \omega \mapsto \sum_{j = -m}^m c_j \ee^{2 \pi \ii j \omega}\) be a trigonometric polynomial for some \(m \in \N_0\) and \(c_{-m}, \dotsc, c_m \in \C\) (actually \(c_{-j} = \overline{c_j}\) holds for all \(j \in [0, m]\) because of \(f(\R) \subset \R\)).
For fixed $\theta \in \R$ and $\eps > 0$, we can find some $d_0 = d_0(\varepsilon) \in \N$ such that \(\lim_{\varepsilon \to 0} d_0(\varepsilon) = \infty\) and for all $d \geq d_0$ and all $ x \in [ - \theta \lVert f \rVert_{ \infty } , \theta \lVert f \rVert_{ \infty }]$ (note that $\lVert f \rVert_{ \infty}$ is finite for a trigonometric polynomial $f$) we have
\begin{equation}
\label{EqEstExpviaPoly}
1 - \eps \leq \frac{\ee^x}{p_d(x)} \leq 1 + \eps,
\end{equation}
because $p_d$, as defined in Lemma~\ref{lem:trigon_poly}, is the $d$\textsuperscript{th} Taylor polynomial of the exponential function. Recall that, by Lemma~\ref{lem:trigon_poly}, for fixed $d \geq d_0$ we can find $k_0 \in \N$ such that, for all \(\theta \in \R\) and \(n > k_0\),
\begin{equation}
\label{EqIntProdpd}
\int_0^1 \prod_{k = k_0 + 1}^n p_d(\theta f(a_k \omega)) \, \dif\omega = b_0(\theta, d)^{n - k_0}.
\end{equation}

Combining ~\eqref{EqEstExpviaPoly} and ~\eqref{EqIntProdpd} results in the estimate 
\begin{equation*}
\Exp\bigl[ \ee^{ \theta  S_n}   \bigr] = \int_{0}^{1} \prod_{k=1}^{n} \ee^{ \theta f( a_k \omega ) } \,\dif\omega  \leq C_1 (1 + \eps )^{ n-k_0} \int_{0}^1 \prod_{k=k_0 +1}^{n} p_d( \theta f( a_k \omega) ) \,\dif\omega = C_1 (1 + \eps )^{ n-k_0} b_0(\theta, d)^{n - k_0},
\end{equation*}
where we have estimated $ \prod_{k=1}^{k_0} \ee^{ \theta f( a_k \omega ) } \leq  \ee^{ k_0 \theta \lVert f \rVert_{ \infty }} =: C_1 $ for $\omega \in [0,1]$. Analogously one receives the lower bound, where we have
\begin{equation*}
\Exp\bigl[ \ee^{ \theta  S_n}   \bigr] \geq C_2(1 - \eps )^{ n-k_0} b_0(\theta, d)^{n - k_0}
\end{equation*}
with $C_2 := \ee^{- k_0 \theta \lVert f \rVert_{ \infty }} $.
Together this leads to
\begin{equation*}
\log (1 - \eps ) + \log b_0( \theta , d) \leq  \liminf_{ n \rightarrow \infty } \frac{1}{n} \log \Exp\bigl[ \ee^{ \theta  S_n}   \bigr]
\end{equation*}
and
\begin{equation*}
\limsup_{ n \rightarrow \infty } \frac{1}{n} \log \Exp\bigl[ \ee^{ \theta  S_n}   \bigr]   \leq \log (1 + \eps ) + \log  b_0( \theta , d).
\end{equation*}
Now, if we let $ \eps \rightarrow 0$ (and thus $d \rightarrow \infty$) and combine the previous estimates with \eqref{eq:limit b_0(theta,d)} of Lemma~\ref{lem:trigon_poly}, we obtain
\begin{equation*}
\lim_{n \rightarrow \infty } \frac{1}{n} \log  \Exp\bigl[ \ee^{ \theta  S_n}   \bigr]  = \log   \int_{0}^{1} \ee^{ \theta f(\omega )} \,\dif\omega  = \widetilde{\Lambda}^f( \theta ).
\end{equation*}
Hence, the G\"artner--Ellis limit exists for every \(\theta \in \R\), is finite, and the map \(\widetilde{\Lambda}^f\) is clearly differentiable on \(\R\). Therefore, the claim follows.

\textit{Step~2:} Let \(f\) be an arbitrary continuous and $1$-periodic function. Consider the complex algebra \(D := \bigl\{ \omega \mapsto \sum_{j = -m}^m c_j \ee^{2 \pi \ii j \omega} \colon m \in \N_0,\, c_{-m}, \dotsc, c_{m} \in \C \bigr\}\) of all trigonometric polynomials with period~1. We have \(1 \in D\), \(D\) separates points in \([0, 1)\) (because if \(\ee^{2 \pi \ii \omega} = \ee^{2 \pi \ii \omega'}\), then \(\omega  - \omega' \in \Z\)) and is closed under complex conjugation. By the Stone--Weierstraß theorem there exists a sequence \((f_m)_{m \in \N }\) in \(D\) such that, for each \(m \in \N\),
\begin{equation}\label{eq:uniform approximation f fm}
\lVert f - f_m \rVert_\infty \leq \frac{1}{m}.
\end{equation}
Note that each \(f_m\) can be chosen to be real-valued. Indeed, for any \(\omega \in \R\),
\begin{equation*}
\lvert f(\omega) - \Re f_m(\omega) \rvert = \lvert \Re(f(\omega) - f_m(\omega)) \rvert \leq \lvert f(\omega) - f_m(\omega) \rvert \leq \lVert f - f_m \rVert_\infty
\end{equation*}
and hence
\begin{equation*}
\lVert f - \Re f_m \rVert_\infty \leq \lVert f - f_m \rVert_\infty \leq \frac{1}{m}.
\end{equation*}
We are going to prove that, for any \(\theta \in \R\),
\begin{equation*}
\lim_{n \to \infty} \frac{1}{n} \log\Exp[\ee^{\theta S_n}]  = \log \int_0^1 \ee^{\theta f(\omega)} \, \dif\omega  \mathrel{\Big(=}  \widetilde{\Lambda}^f(\theta)\Big).
\end{equation*}
Because obviously \(\widetilde{\Lambda}^f(\R) \subset \R\) and \(\widetilde{\Lambda}^f\) is differentiable on \(\R\), we can then apply the G\"arnter--Ellis theorem to obtain the claimed LDP with the corresponding rate function. So let \(S_{n}^m := \sum_{k = 1}^n f_m(a_k U)\), $n\in\N$, \(\theta \in \R\), and assume \(\varepsilon > 0\). Then there exists \(m_1 \in \N\) such that, for all \(m \geq m_1\),
\begin{equation*}
\frac{\lvert \theta \rvert}{m} < \frac{\varepsilon}{3}.
\end{equation*}
From the uniform approximation \eqref{eq:uniform approximation f fm} we infer, for any \(n \in \N\) and each realization,
\begin{equation*}
\lvert S_n - S_{n}^m \rvert = \biggl\lvert \sum_{k = 1}^n \bigl( f(a_k U) - f_m(a_k U) \bigr) \biggr\rvert \leq \frac{n}{m}.
\end{equation*}
This in its turn implies
\begin{align*}
\frac{1}{n} \log\Exp\Big[\ee^{\theta S_n}\Big] &\leq \frac{1}{n} \log\Exp\Big[\ee^{\theta S_{n}^m + \lvert \theta \rvert n/m}\Big] = \frac{1}{n} \log \Exp\Big[\ee^{\theta S_{n}^m}\Big] + \frac{\lvert \theta \rvert}{m}\\
& < \frac{1}{n} \log\Exp\Big[\ee^{\theta S_{n}^m}\Big] + \frac{\varepsilon}{3}\\
\intertext{and analogously}
\frac{1}{n} \log\Exp\Big[\ee^{\theta S_n}\Big] &> \frac{1}{n} \log\Exp\Big[\ee^{\theta S_{n}^m}\Big] - \frac{\varepsilon}{3}.
\end{align*}
Furthermore, note that
\begin{equation*}
\lim_{m \to \infty} \int_0^1 \ee^{\theta f_m(\omega)} \, \dif\omega = \int_0^1 \ee^{\theta f(\omega)} \, \dif\omega.
\end{equation*}
This is so, because we have $(f_m)_{m \in \N} \rightarrow f$ uniformly, the exponential function is uniformly continuous on, say, \([-\lvert \theta \rvert \lVert f \rVert_\infty - 1, \lvert \theta \rvert \lVert f \rVert_\infty + 1]\) and therefore also \((\ee^{\theta f_m})_{m \in \N } \to \ee^{\theta f}\) uniformly, which yields convergence of the integrals. This shows the existence of \(m_2 \in \N\) such that, for all \(m \geq m_2\),
\begin{equation*}
\biggl\lvert \log\int_0^1 \ee^{\theta f_m(\omega)} \, \dif\omega - \log \int_0^1 \ee^{\theta f(\omega)} \, \dif\omega \biggr\rvert < \frac{\varepsilon}{3}.
\end{equation*}
Let \(m := \max\{m_1, m_2\}\). Then we know from Step~1 that \(\lim_{n \to \infty} \frac{1}{n} \log\Exp[\ee^{\theta S_{n}^m}] = \log \int_0^1 \ee^{\theta f_m(\omega)} \, \dif\omega\). Thus, there exists \(n_0 \geq 1\) such that, for all \(n \geq n_0\),
\begin{equation*}
\biggl\lvert \frac{1}{n} \log\Exp\Big[\ee^{\theta S_n^m}\Big] - \log \int_0^1 \ee^{\theta f_m(\theta)} \, \dif\omega  \biggr\rvert < \frac{\varepsilon}{3}.
\end{equation*}
Now, for every \(n \geq n_0\), we obtain
\begin{align*}
\biggl\lvert \frac{1}{n} \log\Exp\Big[\ee^{\theta S_n}\Big] - \log \int_0^1 \ee^{\theta f(\omega)} \, \dif\omega  \biggr\rvert &\leq \Bigl\lvert \frac{1}{n} \log\Exp\Big[\ee^{\theta S_n}\Big] - \frac{1}{n} \log\Exp\Big[\ee^{\theta S_{n}^m}\Big] \Bigr\rvert\\
&\quad + \biggl\lvert \frac{1}{n} \log\Exp\Big[\ee^{\theta S_{n}^m}\Big] - \log \int_0^1 \ee^{\theta f_m(\omega)} \, \dif\omega \biggr\rvert\\
&\quad + \biggl\lvert \log \int_0^1 \ee^{\theta f_m(\omega)} \, \dif\omega  - \log \int_0^1 \ee^{\theta f(\omega)} \, \dif\omega \biggr\rvert\\
&< \frac{\varepsilon}{3} + \frac{\varepsilon}{3} + \frac{\varepsilon}{3} = \varepsilon
\end{align*}
and the proof is complete.
\end{proof}

In view of the classical work \cite{KacCLT} of Kac, another possible approach uses Fourier analytic methods. Taking this route allows us to establish a large deviation principle under certain growth conditions on the Fourier coefficients. Although this result is obtained under slightly stronger assumptions, we present it here with a view towards potential improvements in the future, where such an approach might be useful.

\begin{proposition}
\label{PropFourierMeth}
Let $f \colon \R \rightarrow \R$ be a $1$-periodic function with $ \int_{0}^{1} f(x)^2 \,\dif x < \infty $ and $(a_k)_{ k \in \N}$ be a sequence with large gaps. For the Fourier expansion
\begin{equation}
\label{EqFourierExpLem}
f(x) = \sum_{k=- \infty}^{ \infty } c_k \ee^{2 \pi \ii k x},
\end{equation}
we assume that $ \lvert c_k \rvert \leq M \lvert k \rvert^{ - \beta }$ for some $ \beta > 1$ and some constant $M\in(0,\infty)$. Then, $(S_n/n)_{n \in \N}$ satisfies the LDP at speed $n$ with GRF $\widetilde{I}^f$ from \eqref{MainResEqiidGRF}. 
\end{proposition}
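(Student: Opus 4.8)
The plan is to verify the hypotheses of the G\"artner--Ellis theorem (Theorem~\ref{ThmGaertnerEllis}) directly, with the Fourier partial sums of $f$ taking over the role that the Stone--Weierstra\ss{} approximants played in the proof of Theorem~\ref{MainResThmLargeGaps}; the point is that the growth condition $\lvert c_k\rvert\leq M\lvert k\rvert^{-\beta}$ with $\beta>1$ supplies a \emph{quantitative} uniform approximation. Concretely, I would establish that for every $\theta\in\R$
\[
\lim_{n\to\infty}\frac{1}{n}\log\Exp\bigl[\ee^{\theta S_n}\bigr]=\widetilde{\Lambda}^f(\theta)=\log\int_0^1\ee^{\theta f(\omega)}\,\dif\omega,
\]
and then note that $\widetilde{\Lambda}^f$ is finite and differentiable on all of $\R$ (differentiate under the integral sign, $f$ being bounded and continuous), so that G\"artner--Ellis yields the LDP at speed $n$ with good rate function $(\widetilde{\Lambda}^f)^{*}=\widetilde{I}^f$.

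For the displayed limit I would proceed as follows. Since $\beta>1$, the coefficient bound gives $\sum_{k\in\Z}\lvert c_k\rvert<\infty$, so $f$ is continuous and its symmetric Fourier partial sums $f_N:=\sum_{\lvert k\rvert\leq N}c_k\ee^{2\pi\ii kx}$ --- real-valued trigonometric polynomials, as $c_{-k}=\overline{c_k}$ --- converge to $f$ uniformly with the explicit rate $\lVert f-f_N\rVert_\infty\leq\sum_{\lvert k\rvert>N}\lvert c_k\rvert\leq 2M\sum_{k>N}k^{-\beta}=:\rho_N$, with $\rho_N\to0$. For fixed $N$, the content of Step~1 of the proof of Theorem~\ref{MainResThmLargeGaps} applies to the trigonometric polynomial $f_N$ (that step rests only on Lemma~\ref{lem:trigon_poly}, which is in no way special to cosines) and gives $\lim_{n\to\infty}\frac1n\log\Exp[\ee^{\theta S_n^{(N)}}]=\widetilde{\Lambda}^{f_N}(\theta)$, where $S_n^{(N)}:=\sum_{k=1}^n f_N(a_kU)$. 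Transferring this to $f$ is then routine: from $\lvert S_n-S_n^{(N)}\rvert\leq n\rho_N$ one gets $\bigl\lvert\tfrac1n\log\Exp[\ee^{\theta S_n}]-\tfrac1n\log\Exp[\ee^{\theta S_n^{(N)}}]\bigr\rvert\leq\lvert\theta\rvert\rho_N$; letting $n\to\infty$ with $N$ fixed, then $N\to\infty$, and using $\widetilde{\Lambda}^{f_N}(\theta)\to\widetilde{\Lambda}^f(\theta)$ (which follows from $f_N\to f$ uniformly together with uniform continuity of $\exp$ on bounded intervals, exactly as in Step~2 of the proof of Theorem~\ref{MainResThmLargeGaps}) produces the limit.

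I do not expect a deep obstacle here; the one thing to watch is the order of limits. The index $k_0$ produced by Lemma~\ref{lem:trigon_poly} grows with the degree of the trigonometric polynomial and hence with $N$, so $N$ must be frozen before $n\to\infty$, and the per-term error $\rho_N$ is amplified to $n\rho_N$ across the $n$ factors of $\prod_{k=1}^n\ee^{\theta f(a_kU)}$ --- harmless only because it reverts to $\lvert\theta\rvert\rho_N$ after dividing by $n$. It is precisely in guaranteeing $\sum_k\lvert c_k\rvert<\infty$ --- that is, absolute and uniform convergence of the Fourier series of $f$ --- that the hypothesis $\beta>1$ is genuinely used; with only $\int_0^1 f^2<\infty$ one would have $L^2$-convergence in place of uniform convergence and the approximation step would need to be reworked, which is the sense in which this Fourier route is expected to extend to some discontinuous $f$. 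A more purely Fourier-analytic variant avoids Step~1 altogether: the growth condition places $f$, and therefore $\ee^{\theta f}=\sum_{m\geq0}\theta^m f^m/m!$, in the Banach algebra of $1$-periodic functions with absolutely convergent Fourier series, so one may truncate the Fourier expansion of $\ee^{\theta f}$ to a trigonometric polynomial $h_N$ which is positive for $N$ large, has $\int_0^1 h_N=\int_0^1\ee^{\theta f}$, and satisfies $\lVert\ee^{\theta f}-h_N\rVert_\infty\to0$; applying the large-gap orthogonality from the proof of Lemma~\ref{lem:trigon_poly} to $h_N$ then reproduces the same limit via a sandwich of the type carried out in Step~1 of the proof of Theorem~\ref{MainResThmLargeGaps}.
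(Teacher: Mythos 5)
Your proposal is correct and follows essentially the same route as the paper: approximate $f$ by its symmetric Fourier partial sums (uniformly, thanks to $\sum_k\lvert c_k\rvert<\infty$ from $\beta>1$), invoke Step~1 of the proof of Theorem~\ref{MainResThmLargeGaps} for each fixed truncation, transfer the G\"artner--Ellis limit via the bound $\lvert S_n-S_n^{(N)}\rvert\leq n\rho_N$, and conclude with the G\"artner--Ellis theorem. Your bookkeeping of the order of limits ($N$ frozen before $n\to\infty$) and the error constant $\lvert\theta\rvert\rho_N$ is, if anything, slightly more careful than the paper's.
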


\begin{rem}
Note that the assumptions of Proposition~\ref{PropFourierMeth} imply continuity of the function $f$. To see this, we consider a sequence $(x_n)_{n \in \N}$ of real numbers converging to some $ x \in \R$. Then, for fixed $N \in \N$, we have
\begin{align*}
\lvert f(x_n) - f(x) \rvert & = \Bigl\lvert  \sum_{k = - \infty }^{ \infty } c _k \ee^{ 2 \pi \ii k x_n} - \sum_{k = - \infty }^{ \infty } c _k \ee^{ 2 \pi \ii k x}  \Bigr\rvert\\
& \leq 
 \sum_{k = - N }^{ N }  \lvert c _k \rvert \bigl\lvert \ee^{ 2 \pi \ii k x_n} - \ee^{ 2 \pi \ii k x} \bigr\rvert + 2 \sum_{\lvert k \rvert > N } \lvert c _k \rvert\\
& \leq \sum_{k = - N }^{ N } \lvert c _k \rvert \bigl\lvert \ee^{ 2 \pi \ii k x_n} - \ee^{ 2 \pi \ii k x} \bigr\rvert + 4 M \sum_{k =N+1}^{ \infty } k^{- \beta },
\end{align*}
where we used our growth condition on the Fourier coefficents of $f$ in the last line. Now, if we let $n \rightarrow \infty$, we have
\begin{equation*}
\lim_{n \rightarrow \infty } \lvert f(x_n) - f(x) \rvert \leq 4 M \sum_{k= N+1 }^{ \infty } k^{- \beta }.
\end{equation*}
For $N \rightarrow \infty$, the right-hand side tends to zero, since we assumed that $\beta > 1$.
\end{rem}
\begin{proof}[Proof of Proposition~\ref{PropFourierMeth}]
We consider the sequence of polynomials $(f_m)_{m \in \N}$ with
\begin{equation*}
f_m(x) := \sum_{k=-m}^{m} c_k \ee^{2 \pi \ii k x}.
\end{equation*}
In the proof of Theorem~\ref{MainResThmLargeGaps} we saw that the LDP holds for $S_n^m := \sum_{ k=1}^n f_m( a_k U )$, $n\in\N$, with the i.i.d.\ rate function $ \widetilde{I}^{f_m}$. Now fix $\varepsilon > 0$ and choose $m=m( \varepsilon )$ such that \(\lim_{\varepsilon \to 0} m(\varepsilon) = \infty\) and
\begin{align*}
\lvert f(x)-f_m(x) \rvert & = \Bigl\lvert \sum_{ \lvert k \rvert > m} c_k \ee^{2 \pi \ii k x} \Bigr\rvert \\
& \leq \sum_{ \lvert k \rvert > m} \lvert c_k \rvert\\
& \leq \varepsilon.
\end{align*}
This approximation can be used to bound the G\"artner--Ellis limit from above: we get
\begin{equation*}
\frac{1}{n} \log\Exp\bigl[ \ee^{ \theta S_n } \bigr] = \frac{1}{n} \log\Exp\bigl[ \ee^{ \theta( S_n - S_n^m)}  \ee^{ \theta S_n^m} \bigr] \leq \varepsilon + \frac{1}{n} \log\Exp\bigl[ \ee^{ \theta S_n^m } \bigr].
\end{equation*}
Thus, taking the limes superior on both sides yields
\begin{equation*}
\limsup_{n \rightarrow \infty } \frac{1}{n} \log\Exp\bigl[ \ee^{ \theta S_n }  \bigr] \leq \varepsilon + \widetilde{ \Lambda}^{ f_m }( \theta ).
\end{equation*}
Similarly, one receives the lower bound 
\begin{equation*}
\liminf_{n \rightarrow \infty } \frac{1}{n} \Exp\bigl[ \ee^{ \theta S_n } \bigr] \geq \widetilde{ \Lambda}^{f_m}( \theta ) - \varepsilon. 
\end{equation*}
Now letting $\varepsilon \rightarrow 0$ (and hence $m \rightarrow \infty $) we get
\begin{equation*}
\liminf_{n \rightarrow \infty } \frac{1}{n} \Exp\bigl[ \ee^{ \theta S_n } \bigr] = \limsup_{n \rightarrow \infty } \frac{1}{n} \Exp\bigl[ \ee^{ \theta S_n } \bigr] = \lim_{m \rightarrow \infty } \widetilde{\Lambda}^{f_m}( \theta).
\end{equation*}
We note that the convergence of $(f_m)_{m \in \N}$ towards $f$ is uniform and that such an $f$ is bounded. This allows us to interchange integral and limit, and we end up with
\begin{equation*}
\lim_{n \to \infty} \frac{1}{n} \log\Exp\bigl[ \ee^{ \theta S_n } \bigr] = \lim_{m \rightarrow \infty } \widetilde{\Lambda}^{f_m}( \theta ) = \log \int_{0}^{1} \ee^{ \theta f(x)} \,\dif x .
\end{equation*}
The latter function is differentiable in $\theta$ and thus by the G\"artner--Ellis theorem we obtain an LDP at speed $n$ for $( S_n /n)_{n \in \N}$ with GRF $ \widetilde{I}^f$. 
\end{proof}

\begin{proof}[Proof of Theorem~\ref{MainResThmGeoProg}]
The proof follows essentially the same steps as the proof of Theorem~B and Proposition~3.4 in \cite{agkpr2020}, where the LDP is proven using tools from hyperbolic dynamics and mixing processes. For more information, we refer the reader to the references given on p.~19 in \cite{agkpr2020}. First, we define the map $\mathcal{T} \colon [0,1] \rightarrow [0,1]$ with
\begin{equation*}
\mathcal{T}( \omega ) := q \omega  - \lfloor q \omega  \rfloor.
\end{equation*}
Then, using that $a_k = q^k$ for $k \in \N $, the lacunary sum $S_n$ from \eqref{IntroEqLacSum} can be written as 
\begin{equation*}
S_n( \omega ) = \sum_{k=1}^{n} f( q^k \omega ) = \sum_{k=1}^{n} f( \mathcal{T}^k ( \omega )) , \quad \omega \in [0,1].
\end{equation*}
As in the proof of Theorem~\ref{MainResThmLargeGaps}, we use the G\"artner--Ellis theorem to show the LDP. Thus, we need to prove that the limit $\Lambda_q^f( \theta ) := \lim_{n \rightarrow \infty } \frac{1}{n} \log \Exp\bigl[ \ee^{ \theta S_n } \bigr] $ exists for all $\theta \in \R $ and is differentiable in $\theta $. In order to do so, we express $\ee^{\theta S_n}$ in terms of a certain linear operator. 
\par{}
Let $\Lip[0,1]$ be the Banach space of Lipschitz-continuous functions $g \colon [0,1] \rightarrow \C $, endowed with the norm $\lVert g \rVert := \lVert g \rVert_{ \infty } + L( g)$, where $L(g) $ is the Lipschitz constant of $g$. Then, for fixed $\theta \in \R$, define the linear operator $\Phi_{ \theta, q } \colon \Lip[0,1] \rightarrow \Lip[0,1]$ by
\begin{equation}
\label{EqPertubatedPFOperator}
\Phi_{ \theta, q }[g]( \omega ) :=  \frac{1}{q} \sum_{k=0}^{q-1} \ee^{ \theta f \bigl( \frac{ \omega + k}{q} \bigr) }g \Bigl( \frac{ \omega + k}{q} \Bigr), \quad \omega \in [0,1], \quad g \in \Lip[0,1].
\end{equation}
Next, we consider the Perron--Frobenius operator associated to $\mathcal{T}$, i.e., $\Phi_q \colon \Lip[0,1] \rightarrow \Lip[0,1]$ with
\begin{equation}
\label{EqPFOperator}
\Phi_{  q }[g]( \omega ) :=  \frac{1}{q} \sum_{k=0}^{q-1} g \Bigl( \frac{ \omega + k}{q} \Bigr), \quad \omega \in [0,1], \quad g \in \Lip[0,1],
\end{equation}
where we note that the operator from \eqref{EqPertubatedPFOperator} can be interpreted as a perturbation of the Perron--Frobenius operator in \eqref{EqPFOperator}. We have that
\begin{equation*}
\Phi_{ \theta, q }[g]( \omega ) = \Phi_q \bigl[ \ee^{\theta f } g \bigr](\omega) = \frac{1}{q} \sum_{k=0}^{q-1} \ee^{ \theta f \bigl( \frac{ \omega + k}{q} \bigr) }g \Bigl( \frac{ \omega + k}{q} \Bigr), \qquad \omega \in [0,1].
\end{equation*}
In a moment we will need the following basic property of $\Phi_q $: for $g \in \Lip[0,1]$,
\begin{equation}
\label{EqMeasPresPF}
\int_{0}^{1} \Phi_q[g]( \omega ) \,\dif\omega = \frac{1}{q} \sum_{k=0}^{q-1}  \int_{0}^{1} g \Bigl( \frac{ \omega + k}{q} \Bigr) \,\dif\omega = \frac{1}{q} \sum_{k=0}^{q-1}  \int_{k/q}^{ (k+1)/q} g (  x) q \,\dif x = \int_{0}^{1} g ( x) \,\dif x,
\end{equation} 
where we used the variable substitution $ ( \omega +k)/ q = x$ for $k=0,\dotsc,q-1$. 
Let $\Phi_{ \theta, q}^n$ and $\Phi_q^n$ denote the $n$-fold composition of $\Phi_{ \theta, q}$ and $\Phi_q$ respectively. Then, by Proposition~5.1 in \cite{baladi}, we have for $g \in \Lip[0,1]$
\begin{equation*}
\Phi_{ \theta, q}^n[g] = \Phi_q^n \bigl[ \ee^{ \theta S_n } g \bigr], \quad \text{for every } n \in \N.
\end{equation*}
Using this, we can write 
\begin{equation}
\label{EqGEExp}
\Exp\bigl[ \ee^{ \theta S_n }  \bigr] = \int_{0}^{1 } \ee^{ \theta S_n ( \omega )} \,\dif\omega =\int_{0}^{1 } \Phi_q^n \bigl[ \ee^{ \theta S_n} \bigr]  ( \omega ) \,\dif\omega = \int_{0}^{1 } \Phi_{ \theta , q}^n[\mathbb{1}]  ( \omega ) \,\dif\omega,
\end{equation}
where $\mathbb{1}$ denotes the constant function with value $1$. The second equation in \eqref{EqGEExp} holds due to the calculation in \eqref{EqMeasPresPF}.
\par{}
By assumption, $f$ is Lipschitz-continuous and hence Theorem~4.1 in \cite{zinsmeister} and Theorem~1.5 in \cite{baladi} are applicable, therefore we get that $\Phi_{ \theta, q}$ has a positive eigenvalue $\lambda_{ \theta , q} =: \lambda_{ \theta }$ with multiplicity $1$ and all other eigenvalues of $ \Phi_{ \theta, q}$ have strictly smaller modulus than $ \lambda_{ \theta }$. We use the well-known decomposition 
\begin{equation}
\label{EqOrthDecPFOp}
\Phi_{ \theta, q} = \lambda_{ \theta } Q_{ \theta } + R_{ \theta },
\end{equation}
where $ Q_{ \theta }$ is a projection operator onto the line spanned by an eigenfunction $h_{ \theta } > 0$ associated to the eigenvalue $\lambda_{ \theta }$, and $R_{ \theta }$ is an operator whose spectral radius is strictly smaller than \(\lambda_\theta\) and which is orthogonal to \(Q_\theta\) in the sense \(R_\theta Q_\theta = Q_\theta R_\theta = 0\). Moreover, there exists a probability measure $ \mu_{ \theta }  $ on $[0,1]$ such that for all $g \in \Lip[0,1]$ we have that
\begin{equation*}
Q_{ \theta }[g] = h_{ \theta } \, \frac{\int_{0}^{1} g( \omega ) \,\dif\mu_{ \theta} (\omega)  }{ \int_{0}^{1} h_{ \theta  }( \omega ) \,\dif\mu_{ \theta} (\omega)   }.
\end{equation*} 
Using these quantities and the orthogonality of $R_{ \theta }$ and $Q_{ \theta}$ gives us for $g \in \Lip[0,1]$
\begin{equation*}
\Phi_{ \theta, q}^n[g] = \lambda_\theta^n Q_{ \theta }^n[g] + R_{ \theta }^n[g] = \lambda_\theta^n h_{ \theta } \, \frac{\int_{0}^{1}g( \omega ) d \mu_{ \theta} (\omega)}{\int_{0}^{1} h_{ \theta  }( \omega )  d \mu_{ \theta} (\omega)} + R_{ \theta }^n[g].
\end{equation*}
If we set $g = \mathbb{1}$, using \eqref{EqGEExp}, we obtain
\begin{equation}
\label{EqGEExpOperator}
\Exp\bigl[ \ee^{ \theta S_n }  \bigr] = \int_{0}^{1 }\Phi_{ \theta , q}^n[\mathbb{1}](\omega) \,\dif\omega = \lambda_\theta^n \, \frac{\int_{0}^{1} h_{ \theta  }( \omega ) \,\dif\omega  }{ \int_{0}^{1} h_{ \theta  }( \omega ) \,\dif\mu_{ \theta} (\omega)   } + \int_{0}^{1} R_{\theta }^n[\mathbb{1}]( \omega ) \,\dif\omega .
\end{equation}
Since the spectral radius of $R_{ \theta }$ is strictly smaller than $\lambda_{ \theta }$, we get 
\begin{equation}
\label{EqlimGE}
\lim_{ n \rightarrow  \infty } \frac{\Exp\bigl[ \ee^{ \theta S_n }  \bigr]}{ \lambda_{ \theta }^n} = \frac{\int_{0}^{1} h_{\theta}( \omega ) \,\dif\omega}{ \int_{0}^{1} h_{ \theta  }( \omega ) \,\dif\mu_{ \theta} (\omega)}.
\end{equation}
In particular, we have that
\begin{equation*}
\lim_{ n \rightarrow  \infty } \frac{1}{n} \log \Exp\bigl[  \ee^{ \theta S_n }  \bigr] = \log \lambda_{ \theta},
\end{equation*}
which proves the existence of the G\"artner--Ellis limit. 
\par{}
We now turn to the proof of the remaining assertions, which we claim (and justify below)
can be deduced from the perturbation theory of linear operators (see Chapter~7 in \cite{KatoPertTh1995}), in particular the Kato-Rellich theorem, as stated in Theorem~4.24 in \cite{zinsmeister}. Indeed, since the family of operators $\Phi_{ \theta, q} $ depends on $ \theta  \in \C$ in an analytic way (see Proposition~5.1 (P3) in \cite{BroiseTransDil1996} and Theorem~1.7 in \cite{KatoPertTh1995}), the decomposition \eqref{EqOrthDecPFOp} continues to hold in some neighborhood $\mathcal{D}$ of the real axis (with $\lambda_{ \theta } , h_{ \theta }$ and $ \mu_{\theta }$ becoming complex-valued), with $\lambda_{ \theta } \neq 0$ and $ \lambda_{ \theta }$ (as well as $h_{ \theta } , \mu_{ \theta }, R_{ \theta }$ ) being analytic on $\mathcal{D}$. Moreover, $\lvert \lambda_{ \theta } \rvert$ stays strictly greater than the spectral radius of $R_{ \theta } $ if $\mathcal{D}$ is sufficiently small, which, looking at \eqref{EqGEExpOperator}, shows that convergence in \eqref{EqlimGE} is uniform on compact subsets of $\mathcal{D}$.
\par{}
For the second statement of Theorem~\ref{MainResThmGeoProg}, we fix $\theta \in \R$ and let $q \rightarrow \infty $. We note that the operator $\Phi_{ \theta , q}$ from \eqref{EqPertubatedPFOperator} is a Riemann sum and converges to the corresponding Riemann integral. This implies that the sequence of operators $ \Phi_{ \theta, q} $ for $q \geq 2 $ converges to the operator
\begin{equation*}
\widetilde{ \Phi}_{ \theta }[g]( \omega ) = \int_{0}^{1} \ee^{ \theta f( x )} g( x) d x = \widetilde{ \lambda }_{ \theta} \, \frac{ \int_{0}^{1} \ee^{ \theta f( x)} g(x) dx}{ \int_{0}^{1} \ee^{ \theta f( x)} dx} \cdot \mathbb{1}( \omega ),
\end{equation*}
where $ \widetilde{\lambda}_{ \theta }:=\int_{0}^{1} \ee^{ \theta f( x)} dx = \ee^{ \widetilde{\Lambda}^f( \theta )}$ with the cumulant generating function $  \widetilde{\Lambda}^f$ defined in \eqref{MainResEqiidCumGenFct}. Thus, $ \widetilde{\lambda}_{ \theta }$ is the Perron--Frobenius eigenvalue of $ \widetilde{ \Phi}_{ \theta }$, since $\widetilde{ \Phi}_{ \theta }/ \widetilde{\lambda}_{ \theta } $ is a projection onto the line spanned by the function $\mathbb{1} $.

Now if $\theta \in \R$ stays constant and $q \rightarrow \infty $, we can view $ \Phi_{ \theta,q}$ as perturbation of $\widetilde{\Phi}_{ \theta }$. By perturbation theory (see e.g.~\cite{KatoPertTh1995}), we have the convergence of the Perron--Frobenius eigenvalues, that is, $ \lim_{ q \rightarrow \infty } \lambda_{ \theta , q} = \widetilde{ \lambda }_{ \theta }$ for every $\theta \in \R$. Taking the logarithm, we get $\lim_{ q \rightarrow \infty } \Lambda_{q}^f( \theta ) =  \widetilde{\Lambda}^f( \theta )$. Since
the involved functions are convex, the convergence in fact is uniform on compact intervals. By taking the Legendre--Fenchel transform, it follows that $ \lim_{ q \rightarrow \infty } I_q^f(x) = \widetilde{I}^f(x)$ locally uniformly on $(-1, 1)$. 
\end{proof}

\begin{proof}[Proof of Corollary~\ref{MainResKorGeoCase}] 
Let $ ( f_m )_{m \in \N} $ be a sequence of Lipschitz continuous functions with $ \lVert f- f_m \rVert_\infty \leq \frac{1}{m}$ for all $m \in \N$ (we can find such a sequence by the Stone--Weierstraß theorem). We show that the sequence $S_n^m := \sum_{k= 1}^n f_m( a_k U)$ for $m \in \N$ is a sequence of exponentially good approximations for $S_n$. For more details see, e.g., Section~4.2.2 in \cite{dezei1998}. Note that for all $ \delta > 0$ we have
\begin{equation*}
\lim_{m \rightarrow \infty } \limsup_{n \rightarrow \infty } \frac{1}{n} \log\Pro\Bigl[ \frac{1}{n} \lvert S_n - S_n^m  \rvert > \delta \Bigr] = - \infty ,
\end{equation*} 
since $\lvert S_n - S_n^m \rvert = \bigl\lvert \sum_{ k=1}^n (f(a_k U)- f_m(a_k U)) \bigr\rvert \leq \frac{n}{m} $ and hence $\Pro\bigl[ \frac{1}{n} \lvert S_n - S_n^m  \rvert > \delta \bigr] = 0$ for sufficiently large $m \in \N$.
Thus, by Theorem~4.2.16 in \cite{dezei1998}, $ (S_n/n)_{n \in \N } $ satisfies a weak LDP at speed $n$ with rate function $\Rate_q^f \colon \R \rightarrow [0, \infty ]$ given by
\begin{equation*}
\Rate_q^f(x) := \sup_{ \delta > 0} \liminf_{m \rightarrow \infty } \inf_{ z \in B_\delta(x)} I_q^{f_m}(z).
\end{equation*}
Here, $B_{\delta}(x)$ denotes the ball around $x$ with radius $\delta$, and $I_q^{f_m}$ is the GRF of the LDP for $(S_n^m/n)_{n \in \N}$. 

Since $f$ is a continuous and periodic function, it is bounded and we have that $(S_n/n)_{n \in \N }$ is exponentially tight (see e.g.\ Section~1.2 in \cite{dezei1998}). Hence, $( S_n/n)_{n \in \N}$ satisfies the full LDP and $\Rate_q^f$ is a GRF. 
\end{proof}

\section{Some open problems}

In this final section we collect some open problems that seem to be out of reach with the approach chosen in this paper.

\begin{prob}
It would be interesting to gather more properties of the cumulant generating function $\Lambda_q^f$ from Theorem~\ref{MainResThmGeoProg}. Maybe one can extract similar properties as in Theorem~B in \cite{agkpr2020} for the cosine.
\end{prob}

\begin{prob}
The cumulant generating function $\Lambda_q^f$ in the geometric case is of particular interest. One could try to calculate the G\"artner--Ellis limit directly (i.e., without using Perron--Frobenius theory). First, one can approximate $\ee^{ \theta f(x)}$ uniformly by trigonometric polynomials $f_m(x) = \sum_{j=-m}^{m} c_j \ee^{2 \pi \ii j x}$. We can then work with 
\begin{align*}
\Exp\bigl[ \ee^{ \theta S_n^m} \bigr] & = \int_{0}^{1} \prod_{k=1}^{n} \sum_{j=-m}^{m} c_j \ee^{2 \pi \ii j q^k x} \,\dif x.
\end{align*}
This leads to the set of solutions of
\begin{equation}
\label{ProblemsDioEqGeoSeq}
j_1q+ \dotsb +j_nq^n=0,
\end{equation}
where each $ j_i \in \{ -m,\dotsc,m\} $. Equation \eqref{ProblemsDioEqGeoSeq} has many solutions (indeed the number of solutions grows very fast in $n$) and it seems that the typical solution consists of many powers of $q$. 
\end{prob}

\begin{prob}
We again consider a continuous and periodic function $f \colon \R \rightarrow \R $. In this case we work with a lacunary sequence $(a_k)_{k \in \N}$ with
\begin{equation*}
\frac{a_{k+1}}{a_k} \rightarrow \eta > 1,
\end{equation*}
where $\eta $ is some transcendental number. Using uniform approximation via trigonometric polynomials and calculating the G\"artner--Ellis limit leads to the set of solutions
\begin{equation*}
j_1 a_1 + \dotsb + j_n a_n = 0,
\end{equation*}
where all $j_i \in \{ -m,\dotsc, m \} $ for some fixed $m \in \N$. One could even work with the equation
\begin{equation}
\label{ProblemsDioEqTranszSeq}
j_{k_n} a_{k_n}+ \dotsb + j_n a_n = 0,
\end{equation}
where $(k_n)_{n \in \N}$ is a sequence of natural numbers with $k_n \ll n$, and again $j_i \in \{ -m,\dotsc,m \}$. 
\newline 
It seems that for sufficiently large $n$, there are no non-trivial solutions to \eqref{ProblemsDioEqTranszSeq}. In this case we would be able to establish an LDP for $(S_n/n)_{n \in \N  }$ with the same GRF $\widetilde{I}^f$ as in the independent case.
\end{prob}

\subsection*{Acknowledgement}
LF, MJ, and JP are supported by the Austrian Science Fund (FWF) Project P32405 \textit{Asymptotic geometric analysis and applications} of which JP is principal investigator. LF and JP are also supported by the FWF Project F5513-N26 which is a part of the Special Research Program Quasi-Monte Carlo Methods: Theory and Applications. This work is part of the Ph.D. theses of LF and MJ written under supervision of JP.

\bibliographystyle{plain}
\bibliography{lacunary_sums_new_bib}

\bigskip
\bigskip

\medskip

\small

\noindent \textsc{Lorenz Fr\"uhwirth:} Institute of Mathematics and Scientific Computing,
University of Graz, Heinrichstra{\ss}e 36, 8010 Graz, Austria

\noindent
\textit{E-mail:} \texttt{lorenz.fruehwirth@uni-graz.at}

\medskip

\small

\noindent \textsc{Michael Juhos:} Institute of Mathematics and Scientific Computing,
University of Graz, Heinrichstra{\ss}e 36, 8010 Graz, Austria

\noindent
\textit{E-mail:} \texttt{michael.juhos@uni-graz.at}

\medskip

\small

\noindent \textsc{Joscha Prochno:} Institute of Mathematics and Scientific Computing,
University of Graz, Heinrichstra{\ss}e 36, 8010 Graz, Austria

\noindent
\textit{E-mail:} \texttt{joscha.prochno@uni-graz.at}

\end{document}